\newtheorem{Theorem}{Theorem}[section]
\newtheorem{Claim}[Theorem]{Claim}
\newtheorem{Proposition}[Theorem]{Proposition}
\newtheorem{Lemma}[Theorem]{Lemma}
\newtheorem{Corollary}[Theorem]{Corollary}
\newtheorem{Definition-Proposition}[Theorem]{Definition-Theorem}
\newtheorem{Main Conjecture}[Theorem]{Main Conjecture}
\theoremstyle{remark}
\newtheorem{Example}[Theorem]{Example}
\newcommand\bor{{\mathfrak B}}
\theoremstyle{plain}
\newtheorem{Question}{Question}
\newcommand{\comment}[1]{$\star${\sf\textbf{#1}}$\star$}
\newcommand{\cellsize}{15}
\newlength{\cellsz} \setlength{\cellsz}{\cellsize\unitlength}
\newsavebox{\cell}
\sbox{\cell}{\begin{picture}(\cellsize,\cellsize)
\put(0,0){\line(1,0){\cellsize}}
\put(0,0){\line(0,1){\cellsize}}
\put(\cellsize,0){\line(0,1){\cellsize}}
\put(0,\cellsize){\line(1,0){\cellsize}}
\end{picture}}
\newcommand\cellify[1]{\def\thearg{#1}\def\nothing{}%
\ifx\thearg\nothing
\vrule width0pt height\cellsz depth0pt\else
\hbox to 0pt{\usebox{\cell} \hss}\fi%
\vbox to \cellsz{
\vss
\hbox to \cellsz{\hss$#1$\hss}
\vss}}
\newcommand\tableau[1]{\vtop{\let\\\cr
\baselineskip -16000pt \lineskiplimit 16000pt \lineskip 0pt
\ialign{&\cellify{##}\cr#1\crcr}}}
\newcommand{\kellsize}{17}
\newlength{\kellsz} \setlength{\kellsz}{\kellsize\unitlength}
\newsavebox{\kell}
\sbox{\kell}{\begin{picture}(\kellsize,\kellsize)
\put(0,0){\line(1,0){\kellsize}}
\put(0,0){\line(0,1){\kellsize}}
\put(\kellsize,0){\line(0,1){\kellsize}}
\put(0,\kellsize){\line(1,0){\kellsize}}
\end{picture}}
\newcommand\kellify[1]{\def\thearg{#1}\def\nothing{}%
\ifx\thearg\nothing
\vrule width0pt height\kellsz depth0pt\else
\hbox to 0pt{\usebox{\kell} \hss}\fi%
\vbox to \kellsz{
\vss
\hbox to \kellsz{\hss$#1$\hss}
\vss}}
\newcommand\ktableau[1]{\vtop{\let\\\cr
\baselineskip -16000pt \lineskiplimit 16000pt \lineskip 0pt
\ialign{&\kellify{##}\cr#1\crcr}}}
\newcommand{\sellsize}{36}
\newlength{\sellsz} \setlength{\sellsz}{\sellsize\unitlength}
\newsavebox{\sell}
\sbox{\sell}{\begin{picture}(\sellsize,20)
\put(0,0){\line(1,0){\sellsize}}
\put(0,0){\line(0,1){\sellsize}}
\put(\sellsize,0){\line(0,1){\sellsize}}
\put(0,\sellsize){\line(1,0){\sellsize}}
\end{picture}}
\newcommand\sellify[1]{\def\thearg{#1}\def\nothing{}%
\ifx\thearg\nothing
\vrule width0pt height\sellsz depth0pt\else
\hbox to 0pt{\usebox{\sell} \hss}\fi%
\vbox to \sellsz{
\vss
\hbox to \sellsz{\hss$#1$\hss}
\vss}}
\newcommand\stableau[1]{\vtop{\let\\\cr
\baselineskip -16000pt \lineskiplimit 16000pt \lineskip 0pt
\ialign{&\sellify{##}\cr#1\crcr}}}
\newcommand{\smellsize}{7}
\newlength{\smellsz} \setlength{\smellsz}{\smellsize\unitlength}
\newsavebox{\smell}
\sbox{\smell}{\begin{picture}(\smellsize,\smellsize)
\put(0,0){\line(1,0){\smellsize}}
\put(0,0){\line(0,1){\smellsize}}
\put(\smellsize,0){\line(0,1){\smellsize}}
\put(0,\smellsize){\line(1,0){\smellsize}}
\end{picture}}
\newcommand\smellify[1]{\def\thearg{#1}\def\nothing{}%
\ifx\thearg\nothing
\vrule width0pt height\smellsz depth0pt\else
\hbox to 0pt{\usebox{\smell} \hss}\fi%
\vbox to \smellsz{
\vss
\hbox to \smellsz{\hss$#1$\hss}
\vss}}
\newcommand\smtableau[1]{\vtop{\let\\\cr
\baselineskip -16000pt \lineskiplimit 16000pt \lineskip 0pt
\ialign{&\smellify{##}\cr#1\crcr}}}
\begin{document}
\pagestyle{plain}

\mbox{}
\title{The direct sum map on Grassmannians and\\ jeu de taquin for increasing tableaux}
\author{Hugh Thomas}
\address{Department of Mathematics and Statistics, University of New
Brunswick, Fredericton, New Brunswick, E3B 5A3, Canada }
\email{hugh@math.unb.ca}

\author{Alexander Yong}
\address{Department of Mathematics, University of Illinois at
Urbana-Champaign, Urbana, IL 61801, USA}
\keywords{$K$-theory, Schubert calculus, jeu de taquin for increasing tableaux}
\email{ayong@math.uiuc.edu}

\date{January 31, 2010}

\begin{abstract}
The direct sum map $Gr(a,{\mathbb C}^n) \times Gr(b,{\mathbb C}^m)\rightarrow Gr(a+b,{\mathbb C}^{m+n})$
on Grassmannians induces a $K$-theory pullback that defines the  \emph{splitting coefficients}.
%$K^0(Gr(a+b,\mathbb C^{m+n}))\rightarrow K^0(Gr(a,\mathbb C^n) \otimes
%K^0(Gr(b,\mathbb C^m))$.
We geometrically explain an identity from [Buch '02]
between the splitting coefficients
and the Schubert structure constants for products of Schubert structure sheaves.
This is related to the topic of product and splitting coefficients for Schubert boundary ideal sheaves.
Our main results extend
\emph{jeu de taquin for increasing tableaux} [Thomas-Yong '09] by proving transparent
analogues of [Sch\"{u}tzenberger '77]'s fundamental theorems on well-definedness of rectification.
%This leads to a new product on increasing
%tableaux, different from the one based on Hecke insertion
%Buch-Kresch-Shimozono-Tamvakis-Yong '09], a $K$-theoretic analogue of [Knuth '70]'s plactic product.
%Thanks to the relationship between $K$-theory product coefficients and
%splitting coefficients, our previous work on calculating $K$-theory
%product coefficients can be applied in the present setting.
We then establish that \emph{jeu de taquin} gives rules for each of these four kinds of coefficients.
\end{abstract}
\maketitle

%\tableofcontents

\section{Introduction}

This paper studies
\emph{jeu de taquin for increasing tableaux}, introduced in
application
to $K$-theoretic Schubert calculus \cite{Thomas.Yong:V}; see also \cite{Thomas.Yong:VI, Thomas.Yong:X}.
We establish new analogues of
M.-P.~Sch\"{u}tzenberger's fundamental theorems of classical {\it jeu de taquin}
\cite{Schutzenberger}. We then obtain new {\it jeu de taquin} rules, for splitting
coefficients on the
$K$-theory of Grassmannians, and for splitting and product coefficients for Schubert boundary ideal sheaves.

\subsection{Four kinds of coefficients in the $K$-theory of Grassmannians}
Let $X=Gr(k,{\mathbb C}^n)$ denote the Grassmannian of $k$-dimensional planes
in
${\mathbb C}^n$. This text further studies the algorithms of \cite{Thomas.Yong:V}.
These were used there to give a combinatorial rule (see Theorem~\ref{thm:TY5main} below) for the {\bf Schubert
structure constants} $C_{\lambda,\mu}^{\nu}\in {\mathbb Z}$
in the Grothendieck ring $K^{0}(X)$ of algebraic vector bundles on $X$,
with respect to multiplication in
the basis of Schubert structure sheaves $\{[{\mathcal O}_{X_{\lambda}}]\}$. That is to say, $C^\nu_{\lambda\mu}$ is defined by
$[{\mathcal O}_{X_{\lambda}}]\cdot [{\mathcal O}_{X_{\mu}}]
=\sum_{\nu} C_{\lambda,\mu}^{\nu} [{\mathcal O}_{X_{\nu}}]$, where
$\lambda,\mu,\nu$ are Young diagrams contained inside the ambient rectangle
$\Lambda:=k\times (n-k)$.
A different combinatorial rule for calculating the
coefficients $C_{\lambda,\mu}^{\nu}$ was earlier established by A.~Buch~\cite{Buch:KLR};
other rules have since followed (see \cite{Thomas.Yong:V} and the references therein).

When $|\nu|=|\lambda|+|\mu|$ (where $|\lambda|=\lambda_1+\lambda_2+\cdots+\lambda_k$ etc.), $C_{\lambda,\mu}^{\nu}$ are the
{\bf classical Littlewood-Richardson coefficients} $c_{\lambda,\mu}^{\nu}$
governing $H^{\star}(X)$,
the ordinary cohomology ring of $X$. These numbers can be viewed as
counting points in
the intersection of generic $GL_n$-translates of $X_{\lambda},X_{\mu}$ and
$X_{\nu^{\vee}}$ (where $\nu^{\vee}$ is the $180$ degree rotation of the
complement
of $\nu$ in $\Lambda$).

The combinatorics supporting our rule for $C_{\lambda,\mu}^{\nu}$ extended the \emph{jeu de taquin} theory introduced by
M.-P.~Sch\"{u}tzenberger \cite{Schutzenberger}. The linchpin of his approach consists of two deep results, sometimes
called the \emph{first and second fundamental theorems of jeu de taquin}.
These key theorems of the classical theory do not
\emph{naively} extend to our $K$-theory setting. Small counterexamples are shown
in \cite[Section~8]{Thomas.Yong:V} (see also Section~6),
indicating combinatorial
obstructions. However, in \cite{Thomas.Yong:V}, we proved a
form of these fundamental theorems, sufficient to give a rule to compute $C_{\lambda,\mu}^{\nu}$.
That result notwithstanding, we have since desired conceptual explanations
for these obstructions, both for intrinsic interest, but also, e.g., to have a
better
foundation to understand our extensions
of {\it jeu de taquin} to equivariant cohomology
and equivariant $K$-theory of $X$ \cite{Thomas.Yong:VII}.

Towards this end, one can geometrically motivate associating a
different collection of numbers to
the triple $(\lambda,\mu,\nu)$. Suppose $k_1<n_1 \mbox{\ and \ } k_2<n_2$
are two pairs of positive integers and $k=k_1+k_2, \ n=n_1+n_2$.
Let
\[Y=Gr(k_1, {\mathbb C}^{n_1}) \mbox{ \ and \ } Z=Gr(k_2,{\mathbb C}^{n_2}).\]
There is the {\bf direct sum map}
\begin{eqnarray}\nonumber
\psi: & Y\times Z & \to X\\ \nonumber
 &      (V_1,V_2) &  \mapsto V_1\oplus V_2 \ \ \ \ \ \subseteq
{\mathbb C}^{n_1}\oplus {\mathbb C}^{n_2}\cong
{\mathbb C}^n.\nonumber
\end{eqnarray}
Consider the pullback $\psi^{\star}$ on both
cohomology and $K$-theory. In the former case, we have
\[\psi^{\star}:H^{\star}(X)\to H^{\star}(Y\times Z)\cong
H^{\star}(Y)\otimes
H^{\star}(Z).\]
The latter case replaces ``$H^{\star}$'' by
``$K^{0}$'' throughout.
These pullbacks can be defined explicitly on the Schubert basis.

In cohomology,
the classical Littlewood-Richardson coefficients
also define the pullback:
\begin{equation}
\label{eqn:conflate}
\psi^{\star}([X_{\nu}])=\sum_{\nu} c_{\lambda,\mu}^{\nu}\
[Y_\lambda]\otimes
[Z_{\mu}],
\end{equation}
where $[X_{\nu}], [Y_{\lambda}]$ and $[Z_{\mu}]$ are Schubert classes in
$H^{\star}(X), H^{\star}(Y)$ and $H^{\star}(Z)$ respectively. This notation is explained in Section~2.2.
%\comment{HT says: maybe the following sentence is a bit wrong, since,
%as such, this isn't computing a coproduct any more than what we are doing
%in $K$-theory is?  Maybe we should remove it?}
%Algebraically, the equality of the coefficients
%reflects  the fact that the ring of symmetric functions
%is isomorphic as a Hopf algebra to its graded dual.
%This reflects that the ring of symmetric functions has a dual Hopf algebra
%structure, with the same coefficients computing the product and coproduct.

Similarly, the $K$-theory pullback can be described as
\[\psi^{\star}([{\mathcal O}_{X_{\nu}}])=
\sum_{\nu} D_{\lambda,\mu}^{\nu}\ [{\mathcal O}_{Y_{\lambda}}]\otimes
[{\mathcal O}_{Z_{\mu}}],\]
but, in contrast, the {\bf splitting coefficient} $D_{\lambda,\mu}^{\nu}$ is not generally equal to the Schubert structure
constant $C_{\lambda,\mu}^{\nu}$.

The numbers
$D_{\lambda,\mu}^{\nu}$ were first treated, along with $C_{\lambda,\mu}^{\nu}$,
in \cite{Buch:KLR}, who gave a simple identity relating the two collections
of numbers.
We give a geometric proof of an essentially equivalent identity in Section~2, by
seeing the image of a product of Schubert varieties under $\psi$
as a certain Richardson variety inside $X$. This relation is used in our proof
of a new combinatorial rule for $D_{\lambda\mu}^\nu$
in Section~3. A.~Buch earlier proved a combinatorial rule for $D_{\lambda,\mu}^{\nu}$ that we recall for
comparison purposes in Section~3.

Although the numbers $D_{\lambda,\mu}^{\nu}$ are perhaps less well-known
than $c_{\lambda,\mu}^{\nu}$ and $C_{\lambda,\mu}^{\nu}$, there is interest in their 
combinatorics. These splitting coefficients appear in the study of
quiver/degeneracy loci
of vector bundles over smooth projective varieties
\cite{Buch:quiver}.
The coefficients $D_{\lambda,\mu}^\nu$ can also be viewed
as the structure constants for a coproduct on a ring $\Gamma$ defined by
A.~Buch as a $K$-theoretic analogue of the ring of symmetric functions, see \cite{Buch:KLR} and \cite{Lam.Pyl}.
In addition, they appear (in greater generality) in formulas for
Grothendieck polynomials; see \cite{Lenart.Robinson.Sottile}, \cite{BKSTY}
and the references therein.

    In our study of the direct sum map in Section~2, we
recall the classes of \emph{Schubert boundary ideal sheaves} $[\partial X_{\lambda}]\in K^0(X)$, which form
a linear basis for $K^0(X)$ dual to $\{{\mathcal O}_{X_{\lambda}}\}$. Consequently, we discuss two other coefficients, parallel to $C_{\lambda,\mu}^{\nu}$ and $D_{\lambda,\mu}^{\nu}$ above, namely
\[[\partial X_{\lambda}][\partial X_{\mu}]=\sum_{v}E_{\lambda,\mu}^{\nu}[\partial X_{\nu}].\]
and
\[\psi^{\star}([\partial X_{\nu}])=
\sum_{\nu} F_{\lambda,\mu}^{\nu}\ [\partial Y_{\lambda}]\otimes
[\partial Z_{\mu}].\]
In general, the coefficients $C_{\lambda,\mu}^{\nu}$, $D_{\lambda,\mu}^{\nu}$ and $E_{\lambda,\mu}^{\nu}$ are
different, while A.~Buch has shown $D_{\lambda,\mu}^{\nu}=F_{\lambda,\mu}^{\nu}$.
In brief, this paper establishes that \emph{jeu de taquin} for increasing tableaux provides combinatorics for each of these four kinds of
coefficients; compare Theorems~\ref{cor:main},~\ref{thm:TY5main},~\ref{thm:main3} and Corollary~\ref{cor:main3} below.

\subsection{Main results}
We briefly recall the basic constructions of \cite{Thomas.Yong:V};
we refer the reader to that paper for further details.

An {\bf increasing tableau} $T$ is a filling of each square of
the skew shape
${\tt shape}(T)=\nu/\lambda$
with a natural number such that the rows and columns strictly increase.
Let ${\tt INC}(\nu/\lambda)$ be the set of these increasing tableaux.

A {\bf short ribbon} ${\mathfrak R}$ is a connected
skew shape that does not contain a $2\times 2$ subshape and where
each row and column contains at most two boxes. An
{\bf alternating ribbon} fills ${\mathfrak R}$ with two symbols where
adjacent boxes are filled differently. Define
${\tt switch}({\mathfrak R})$ to be the same shape as
${\mathfrak R}$ but where each box is instead filled with the other symbol,
except that  if ${\mathfrak R}$ consists of a single box,
$\tt switch$ does nothing to
it.
Define $\tt switch$ to act on a union of alternating ribbons,
by acting on each separately.
For example:

\[{\mathfrak R}=\tableau{&&&{\bullet }\\&{\circ}&{\bullet}\\{\circ}&{\bullet}\\{\bullet}}
\mbox{ \ \ \ \ \ \ \ \ \ \ \ \ \ \
${\tt switch}({\mathfrak R})=
\tableau{&&&{\bullet}\\&{\bullet}&{\circ}\\{\bullet}&{\circ}\\{\circ}}$.}
\]

Given $T\in {\tt INC}(\nu/\lambda)$,
an {\bf inner corner} is a maximally southeast box $x\in\lambda$.
Fix some set of inner corners
$\{x_1,\ldots,x_s\}$, and fill each with a ``$\bullet$''.
Let ${\mathfrak R}_1$ be the
union of alternating ribbons
made of boxes using $\bullet$ or $1$.  Apply
$\tt switch$ to ${\mathfrak R}_1$.  Now let ${\mathfrak R}_2$ be the union of alternating ribbons
using $\bullet$ or $2$, and proceed as before.
Repeat until the $\bullet$'s
have been {\tt switch}ed past all the entries of $T$.
The final placement of the numerical
entries gives  $K{\tt jdt}_{\{x_i\}}(T)$.

\begin{Example}
\label{exa:swseq}
$T=\tableau{{ \ }&{ \bullet }&{1}&{2}\\{ \bullet }&{2}&{3}\\{2}&{3}}\mapsto
\tableau{{ \ }&{ 1 }&{\bullet}&{2}\\{ \bullet }&{2}&{3}\\{2}&{3}}
\mapsto
\tableau{{ \ }&{ 1 }&{2}&{\bullet}\\{ 2 }&{\bullet}&{3}\\{\bullet}&{3}}
\mapsto
\tableau{{ \ }&{ 1 }&{2}&{\bullet}\\{ 2 }&{3}&{\bullet}\\{3}&{\bullet}}$

Hence
$K{\tt jdt}_{\{x_i\}}(T)=
\tableau{{ \ }&{ 1 }&{2}\\{ 2 }&{3}\\{3}}$. \qed
\end{Example}

Given $T\in {\tt INC}(\nu/\lambda)$, iterate
$K{\tt jdt}$-slides until no such moves are possible.
The result, $K{\tt rect}(T)$, which we call
\emph{a} $K$-{\bf rectification} of $T$, is an increasing
tableau of straight shape, i.e., one whose shape is given by some partition
$\mu$. The choice of $K{\tt jdt}$ slides
is a {\bf rectification order}; these orders are in natural bijection with $S\in {\tt INC}(\lambda)$,
see \cite[Section~3]{Thomas.Yong:V}.  Such rectifications are used in \cite{Thomas.Yong:V} to provide a rule
for $C_{\lambda,\mu}^{\nu}$ (see Theorem~\ref{thm:TY5main} below). In general, there can be multiple
$K$-rectifications of
a given tableau.

Our main combinatorial theorem establishes that, in contrast to
the general setting described above, if we require
$\lambda$ to be {\it rectangular}, then the $K$-rectification of an
increasing tableau of shape $\nu/\lambda$ {\it is} well-defined.
We will explain in a moment how this theorem applies to the
calculation of splitting coefficients.

%As we explain in a moment, our main theorem establishes that in contrast to the $K$-theory product setting,
%the most obvious analogues of M.-P.~Sch\"utzenberger's \cite{Schutzenberger} fundamental theorems
%hold in the context of $K$-theory coproducts:

\begin{Theorem}\label{thm:main}
Let $R=a\times b\subseteq \nu$.
\begin{itemize}
\item[(I)] If $T\in {\tt INC}(\nu/R)$, then
${\tt Krect}(T)$ is independent of the rectification
order.
\item[(II)] The number of tableaux $T\in {\tt INC}(\nu/R)$ that
    $K$-rectify
to a given $C\in {\tt INC}(\mu)$ does not depend on $C$.
\end{itemize}
\end{Theorem}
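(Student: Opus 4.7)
The plan is to prove both parts via the $K$-theoretic infusion machinery built from $K$-jdt, with Part~(I) providing the essential confluence and Part~(II) following by a bijective counting argument.

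For Part~(I), I would set up a local confluence (``diamond lemma'') argument. The rectification orders on $\nu/R$ are in bijection with ${\tt INC}(R)$ via \cite[Section~3]{Thomas.Yong:V}, and any two elements of ${\tt INC}(R)$ are connected by a sequence of elementary transpositions swapping two consecutive labels $i, i{+}1$ placed at incomparable cells. At the level of rectification sequences such a swap amounts to transposing two consecutive $K$-jdt slides $\{x\}$ and $\{x'\}$, where $x, x'$ are inner corners of the current inner shape. Thus Part~(I) reduces to a \emph{local commutation lemma}:
\[{\tt Kjdt}_{\{x'\}}\bigl({\tt Kjdt}_{\{x\}}(T)\bigr) = {\tt Kjdt}_{\{x\}}\bigl({\tt Kjdt}_{\{x'\}}(T)\bigr),\]
valid whenever $T$ appears mid-rectification of an element of ${\tt INC}(\nu/R)$. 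This is the hardest step: generic $K$-jdt violates such commutation (\cite[Section~8]{Thomas.Yong:V}), so the rectangular hypothesis on $R$ must be used essentially. I would exploit the structural fact that at every stage of such a rectification, the boxes already slid through form a sub-Young-diagram of $R$, and argue that this constraint rules out precisely the ribbon configurations responsible for non-confluence in the general case.

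For Part~(II), assuming confluence, I would construct a $K$-theoretic infusion-style bijection
\[\Psi \colon \{(S, T) : S \in {\tt INC}(R),\ T \in {\tt INC}(\nu/R)\} \longleftrightarrow \bigsqcup_{\mu}\{(C, T') : C \in {\tt INC}(\mu),\ T' \in \mathcal{U}_{\mu}\},\]
where $\mu$ ranges over straight shapes and $\mathcal{U}_\mu$ is a set of ``reverse'' increasing tableaux supported on a skew shape determined by $\nu$, $\mu$, and $R$, but not by $C$. By Part~(I), the component of $\Psi(S,T)$ recording the rectification target $C$ depends only on $T$, so for fixed $C \in {\tt INC}(\mu)$ the preimage has cardinality
\[|{\tt INC}(R)| \cdot \bigl|\{T \in {\tt INC}(\nu/R) : {\tt Krect}(T) = C\}\bigr| = |\mathcal{U}_\mu|,\]
which is manifestly independent of $C$. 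Dividing by $|{\tt INC}(R)|$ gives the asserted equinumerosity.

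The principal obstacle is the local commutation lemma underlying Part~(I); I expect its verification to require a careful case analysis tracking how the short-ribbon {\tt switch}es produced by slides at two incomparable inner corners interact, and showing that the genuinely $K$-theoretic anomalies (where a slide leaves behind an extra box) cancel once one imposes the constraint that the already-vacated region sits inside the rectangle $R$.
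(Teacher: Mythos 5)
Your approach diverges substantially from the paper's, and both halves have gaps that I do not think are repairable along the lines you sketch.

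For Part~(I), the paper does not attempt a local-confluence argument on the rectification sequences of $T$. Instead it dualizes: by \cite[Section~3]{Thomas.Yong:V}, rectifying $T\in{\tt INC}(\nu/R)$ by the order $U\in{\tt INC}(R)$ is equivalent (via ${\tt Kinfusion}$) to reverse-rectifying $U$ with slides dictated by the labels of $T$, and the shape of ${\tt Krect}(T)$ is read off from the shape history of the $U$-side. So the paper reduces (I) to the statement that any two $U,U'\in{\tt INC}(R)$ produce the same shape history under a common slide sequence, i.e., that all elements of ${\tt INC}(c\times d)$ are ($K$-)dual equivalent. It then proves the stronger statement (strong $K$-dual equivalence, Theorem~3.2) by an inductive invariant called the ``box of origin,'' which tracks, for each label in the evolving tableau, which cell of the $c\times d$ rectangle it came from, and shows the {\tt switch} ribbons never mix origins. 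Your diamond-lemma plan faces a real obstacle that the paper's route avoids: after the very first slide, the inner shape of $T$ is already a \emph{non}-rectangular $\lambda'\subsetneq R$ (a $c\times d$ rectangle has only one inner corner), and the paper's own counterexample in Section~5 has inner shape $(2,1)$ --- exactly the kind of intermediate shape you would now face. So the constraint ``the already-vacated region sits inside $R$'' does not by itself rule out the bad ribbon interactions; you would need a delicate invariant on which tableaux with non-rectangular inner shape can arise mid-rectification of a $\nu/R$ tableau, which is not supplied and is in effect the whole content of the theorem. I would also flag that rectification steps act on \emph{sets} of inner corners simultaneously, not single corners, so the reduction to pairwise commutation of two single-corner slides, and the claim that all orders in ${\tt INC}(R)$ are connected by elementary transpositions, both need to be reformulated and justified before the diamond lemma can even be stated cleanly.

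For Part~(II), the step you elide --- that $\mathcal{U}_\mu$ is independent of $C$ --- is precisely the nontrivial point, and it does not follow from the involution property of ${\tt Kinfusion}$ alone. The paper's Corollary~3.7 obtains it by combining the involution with Theorem~3.6 (equivalently \cite[Theorem~1.2]{Thomas.Yong:V}): $K$-rectification to a \emph{superstandard} target is order-independent for arbitrary skew shapes. That theorem is what lets one identify $\mathcal{U}_\mu$ as $\{W\in{\tt INC}(\nu/\mu):{\tt Krect}(W)=S_\lambda\}$ with a well-defined, $C$-independent count. Without invoking that result (or re-proving it), the counting argument you propose has a hole. You should also use Part~(I) to fix the rectification order to $S_R$ before applying the involution, as the paper does; otherwise the bijection $\Psi$ is not a bijection onto a set whose second coordinate is visibly $C$-free.
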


In Section~5 we show that for increasing tableaux, Theorem~\ref{thm:main} is sharp:
if $\lambda$ is a non-rectangular shape, one can construct a shape
$\nu\supseteq\lambda$ for which (I) does not hold, and thus
(II) is also not well-defined. This contrasts with
the classical setting where M.-P.~Sch\"{u}tzenberger's fundamental theorems guarantee that
(I) and (II) hold
for rectifications of standard Young tableaux of arbitrary skew shape $\nu/\lambda$.

To connect Theorem~\ref{thm:main} to the splitting coefficients, we
use the theorem in the following
special case:
given $\lambda$ and $\mu$ define a (skew) Young diagram
$\nu=\lambda\star\mu$ by placing $\lambda$ and $\mu$ corner to corner,
with $\lambda$ southwest of $\mu$,
as in the example below:
\begin{Example}
If $\lambda=(4,3,1)$ and $\mu=(3,2)$ then $\lambda\star\mu=
\tableau{&&&&{\ }&{\ }&{ \ }\\&&&&{\ }&{\ }\\{\ }&{\ }&{\ }&{\ }\\
{\ }&{\ }&{\ }\\{\ }}.$\qed
\end{Example}

Clearly, Theorem~\ref{thm:main} applies to $K$-rectifications of
increasing tableaux of skew shape $\lambda\star\mu$,
and thereby defines a new product
on increasing tableaux, which is in some sense an analogue of D.~Knuth's plactic product \cite{Knuth}.
In \cite{Thomas.Yong:VI}, the special case of such $K$-rectifications, where $\mu$
is a single box, plays an important role:
we prove that these $K$-rectifications compute
the Hecke (row) insertion \cite{BKSTY} of a label into an increasing tableau of shape $\lambda$. See the discussion in Section~6.

We are now ready to state our
new combinatorial rule for $D_{\lambda,\mu}^{\nu}$:

\begin{Theorem}
\label{cor:main}
Fix $U\in {\tt INC}(\nu)$.
The $K$-theory splitting coefficient $D_{\lambda,\mu}^{\nu}$
equals $(-1)^{|\lambda|+|\mu|+|\nu|}$ times the number of
$T\in {\tt INC}(\lambda\star\mu)$ such that
${\tt Krect}(T)=U$.
\end{Theorem}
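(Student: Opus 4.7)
The plan is to prove Theorem~\ref{cor:main} by a three-step reduction, combining Section~2's geometric identity with the Thomas--Yong rule for ordinary Schubert structure constants (Theorem~\ref{thm:TY5main}) and the new rectangular-inner-shape theorem (Theorem~\ref{thm:main}).

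First, I would invoke Section~2's identity, which arises from viewing $\psi(Y_{\lambda}\times Z_{\mu})$ as a Richardson variety inside $X$. After the appropriate bookkeeping, this rewrites $D_{\lambda,\mu}^{\nu}$ (up to a computable sign) as a Schubert structure constant $C_{R,\nu}^{\rho}$ in a suitably enlarged Grassmannian. Here $R$ is the rectangle $\lambda_1\times\ell(\mu)$, $\rho$ is the outer shape of $\lambda\star\mu$, and $\nu$ reappears in the role of the ``target'' partition of the $C$-coefficient. The key structural feature is that the skew shape $\rho/R$ equals $\lambda\star\mu$ by construction.

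Second, I would apply Theorem~\ref{thm:TY5main} to $C_{R,\nu}^{\rho}$. This expresses it as $(-1)^{|\rho|-|R|-|\nu|}$ times the number of $T\in{\tt INC}(\rho/R)={\tt INC}(\lambda\star\mu)$ that $K$-rectify to a fixed superstandard tableau $S_{\nu}$ of shape $\nu$. Since $|\rho|-|R|=|\lambda|+|\mu|$, the sign becomes $(-1)^{|\lambda|+|\mu|-|\nu|}=(-1)^{|\lambda|+|\mu|+|\nu|}$, as required.

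Third, because $R$ is a rectangle, Theorem~\ref{thm:main}(II) is applicable: the number of $T\in{\tt INC}(\lambda\star\mu)$ that $K$-rectify to a tableau of shape $\nu$ depends only on the shape $\nu$, not on the particular target. Hence $S_{\nu}$ may be replaced by any $U\in{\tt INC}(\nu)$, yielding the theorem as stated.

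The main obstacle will be the first step: pinning down the Section~2 identity precisely enough that $\nu$ plays the role of the ``target'' in the $C$-rule (rather than becoming an inner or outer shape), and verifying that the sign from that identity combines correctly with the Thomas--Yong sign. The remaining steps are then a clean composition: Section~2 identity $\to$ Theorem~\ref{thm:TY5main} $\to$ Theorem~\ref{thm:main}(II). It is worth emphasizing that Theorem~\ref{thm:main}(II) is indispensable here: without it, only one specific target $S_{\nu}$ would be allowed, whereas the statement of Theorem~\ref{cor:main} permits any $U\in{\tt INC}(\nu)$.
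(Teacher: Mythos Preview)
Your proposal is correct and follows essentially the same route as the paper: the identity $D_{\lambda,\mu}^{\nu}=C_{\Omega^{\vee},\nu}^{\lambda\dagger\mu}$ from Section~2, then Theorem~\ref{thm:TY5main}, then Theorem~\ref{thm:main}. One small slip: with the paper's convention that $a\times b$ means $a$ rows by $b$ columns, the rectangle you want is $\ell(\mu)\times\lambda_1$, not $\lambda_1\times\ell(\mu)$; with that fix your $\rho/R$ is exactly $\lambda\star\mu$ and the rest goes through verbatim.
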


If $|\lambda|+|\mu|=|\nu|$ and $U$ is a standard Young tableau, it is automatic
that the only $T\in {\tt INC}(\lambda\star\mu)$ that can contribute to
$D_{\lambda,\mu}^{\nu}(=c_{\lambda,\mu}^{\nu})$ in Theorem~\ref{cor:main}
are in fact standard Young tableaux. Thus we recover a
classical formulation of the Littlewood-Richardson rule.

We recall our theorem \cite[Theorem~1.4]{Thomas.Yong:V} for the Schubert structure constants 
$C_{\lambda,\mu}^{\nu}$. 
An increasing tableau $S_{\mu}\in {\tt INC}(\mu)$ is {\bf superstandard} if its first row
consists of the labels $1,2,\ldots,\mu_1$, second row consists of $\mu_1+1,\mu_1+2,\ldots,\mu_1+\mu_2$
etc.

\begin{Theorem}
\label{thm:TY5main}
The Schubert structure constant $C_{\lambda,\mu}^{\nu}$ equals $(-1)^{|\lambda|+|\mu|+|\nu|}$ times the
number of $T\in {\tt INC}(\nu/\lambda)$ such that ${\tt Krect}(T)=S_{\mu}$.
\end{Theorem}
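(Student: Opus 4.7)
The plan is to prove this by establishing a weight-preserving bijection between the increasing tableaux $T \in {\tt INC}(\nu/\lambda)$ with ${\tt Krect}(T) = S_\mu$ and the set-valued Littlewood--Richardson tableaux counted by A.~Buch's rule \cite{Buch:KLR}. Buch's rule asserts that $C_{\lambda,\mu}^\nu$ equals $(-1)^{|\lambda|+|\mu|+|\nu|}$ times the number of set-valued semistandard tableaux of shape $\nu/\lambda$ and content $\mu$ whose reverse column reading word is a reverse lattice word. The signs already match, so the task reduces to producing an unsigned bijection.

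The first step is a restricted fundamental theorem of $K$-jeu de taquin: if $T \in {\tt INC}(\nu/\lambda)$ can be $K$-rectified to $S_\mu$ by some choice of slides, then every rectification order also yields $S_\mu$. The unrestricted analogues of Sch\"utzenberger's two fundamental theorems fail for increasing tableaux (see Section~6), but the superstandard target $S_\mu$ has such a rigid row-wise structure that these counterexamples are avoided. One would prove this by showing that swapping the order of two adjacent inner-corner slides either both produce $S_\mu$ or both do not, via a direct analysis of how the $\bullet$-symbols and numerical entries propagate through a pair of adjacent corners.

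The second step is to construct the bijection itself. The natural map sends $T \in {\tt INC}(\nu/\lambda)$ to a set-valued filling of $\nu/\lambda$ built by running the $K$-rectification ``in reverse'' from $S_\mu$ and recording, at each box of $\nu/\lambda$, the multiset of labels that occupy it as the cascade of reverse slides evacuates back out of the straight shape. The restricted fundamental theorem of the previous paragraph guarantees that the output depends only on $T$ and not on the chosen order of slides. One then verifies that the output is a semistandard set-valued tableau of content $\mu$, and that its column word is a reverse lattice word precisely because the $K$-rectification terminated at $S_\mu$, whose own word is the canonical reverse lattice word.

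The main technical obstacle is showing invertibility. A $K$-jdt slide can delete entries when two labels merge, so reconstructing $T$ from the set-valued tableau requires recovering the lost entries unambiguously. The rigidity of $S_\mu$ is the crucial tool here: the reverse lattice condition together with the superstandard shape of the target severely restricts how merges can occur during reverse rectification, so the location and value of every deleted entry is forced. Making this precise --- essentially an ``uncrowding'' lemma tailored to the $K$-setting --- is the technical heart of the argument and plays the role of the second fundamental theorem of classical jeu de taquin.
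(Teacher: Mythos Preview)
This theorem is not proved in the present paper; it is \emph{recalled} from \cite[Theorem~1.4]{Thomas.Yong:V}, and the paper uses it as a black box (together with Theorem~\ref{thm:welldefined}, also quoted from \cite{Thomas.Yong:V}) to derive the new results on splitting coefficients. So there is no proof here to compare against.

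That said, your proposal is not a proof but a plan, and the plan has real gaps. Your Step~1 is exactly Theorem~\ref{thm:welldefined} of this paper (equivalently \cite[Theorem~1.2]{Thomas.Yong:V}), and the actual proof in \cite{Thomas.Yong:V} is substantially more involved than ``a direct analysis of how the $\bullet$-symbols and numerical entries propagate through a pair of adjacent corners''; it goes through a growth-diagram/$K$-infusion formalism and an explicit reading-word characterization of superstandard rectification. Simply asserting that the superstandard target is ``rigid enough'' does not constitute an argument.

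Your Step~2 is the more serious problem. The map you describe --- ``record at each box of $\nu/\lambda$ the multiset of labels that occupy it as the cascade of reverse slides evacuates'' --- is not a known construction, and as stated it is not well-defined: in $K$-jeu de taquin a single label can occupy many boxes over the course of a rectification, and different labels can pass through the same box, so it is unclear what set you are placing in each cell, why the result is semistandard, or why it has content exactly $\mu$. The actual proof in \cite{Thomas.Yong:V} does not build a direct bijection to Buch's set-valued tableaux in this way; rather, it uses the $K$-infusion involution together with the superstandard well-definedness theorem to show that the count is symmetric and agrees with a known formula. If you want a bijective route, you would need to specify your map precisely and then carry out the ``uncrowding'' inverse you allude to in Step~3 --- which you acknowledge is ``the technical heart'' but do not attempt. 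As written, none of the three steps is executed.
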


%(Corollary~\ref{cor:comb} gives a reformulation of Theorem~\ref{thm:TY5main} that was only implicit in \cite{Thomas.Yong:V}.)

%Coherent comprehension of the combinatorial
%structures behind a Littlewood-Richardson type rule
%is at least as important as the rule itself --- witness
%the broad impact of the classical theory of {\it jeu de taquin} (on algebraic geometry,
%representation theory, probabilistic combinatorics, $\ldots$), of which the
%Littlewood-Richardson rule is merely one by-product.
In the classical setting of \cite{Schutzenberger}
one can apply (\ref{eqn:conflate}) to automatically obtain a \emph{jeu
de taquin} rule for the cohomology splitting coefficients. However,
in $K$-theory, one lacks this
coincidence, and is forced to see two different fundamental theorems,
one relevant for the product (as presented in \cite{Thomas.Yong:V}; see also Theorem~\ref{thm:welldefined}),
and one relevant for the splitting coefficients (Theorem~\ref{thm:main}). This underscores our
hope to bring further insight into Sch\"{u}tzenberger's \emph{classical} {\it jeu de taquin},
by understanding it inside the larger, more flexible setting of {\it jeu de taquin}
for increasing tableaux.

We now turn to our results concerning Schubert boundary ideal sheaves.

Define a superset ${\widehat {\tt INC}(\nu/\lambda)}$ of
increasing tableaux, by filling $\nu/\lambda$
with positive integers and the label ``$X$'' on some number (possibly zero)
of the outer corners of $\nu/\lambda$. We still demand
the rows and columns to strictly increase; the labels $X$
can be assumed to have value $\infty$ insofar as this condition is concerned.

For example, we have $\tableau{&&{1}&{2}&{X}\\&{1}&{X}\\{2}&{4}}\in {\widehat {\tt INC}}((5,3,2)/(2,1))$.

The following combinatorial rule for computing $E_{\lambda,\mu}^{\nu}$ is
``positive'' in the sense of D.~Anderson, S.~Griffeth
and E.~Miller \cite{Anderson.Griffeth.Miller}. It is the first such rule for these coefficients (for general $\lambda,\mu,\nu$)
that we are aware of:

\begin{Theorem}
\label{thm:main3}
The structure constant $E_{\lambda,\mu}^{\nu}$ equals $(-1)^{|\nu|-|\lambda|-|\mu|}$ times the number of tableaux $T\in {\widehat {\tt INC}}(\nu/\lambda)$ such that $K{\tt rect}(T)=S_{\mu}$, where the $X$'s are erased before computing the
$K$-rectification.
\end{Theorem}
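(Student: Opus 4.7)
My plan is to reduce the theorem to Theorem~\ref{thm:TY5main} by establishing the $K$-theoretic identity
\[
E_{\lambda,\mu}^{\nu} \;=\; \sum_{P} (-1)^{|P|}\, C_{\lambda,\mu}^{\nu \setminus P} \qquad (\star)
\]
where $P$ ranges over subsets of the removable corners of $\nu$ lying in the skew shape $\nu/\lambda$ (so that $\nu\setminus P$ remains a partition containing $\lambda$). Granting $(\star)$, the theorem follows at once: substituting Theorem~\ref{thm:TY5main} for each $C_{\lambda,\mu}^{\nu\setminus P}$, the signs combine as $(-1)^{|P|}\cdot(-1)^{|\lambda|+|\mu|+|\nu|-|P|} = (-1)^{|\nu|-|\lambda|-|\mu|}$, independent of $P$. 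The remaining unsigned count, via the natural bijection attaching $X$-labels to the removed positions $P$, is precisely the number of $T \in {\widehat{\tt INC}}(\nu/\lambda)$ with $K{\tt rect}(T \setminus X) = S_\mu$.

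For $(\star)$ itself, the geometric input is the inclusion-exclusion
\[
[\partial X_{\alpha}] \;=\; \sum_{S \subseteq \mathrm{Add}(\alpha)} (-1)^{|S|}\, [{\mathcal O}_{X_{\alpha \cup S}}],
\]
obtained by writing $\partial X_\alpha$ as the union of the Schubert divisors $X_{\alpha \cup \{c\}}$ (for $c$ running over the addable corners of $\alpha$) and invoking $K$-theoretic inclusion-exclusion, justified because the various intersections $X_{\alpha \cup S}$ are transverse Richardson-type subvarieties of $X_\alpha$. A one-line matrix verification on the partition-inclusion poset shows this identity is inverted by
\[
[{\mathcal O}_{X_\nu}] \;=\; \sum_{\nu^{+} \supseteq \nu} [\partial X_{\nu^{+}}].
\]
Expanding both sides of $[\partial X_\lambda][\partial X_\mu] = \sum_\nu E_{\lambda,\mu}^{\nu}[\partial X_\nu]$ into the $[{\mathcal O}_{X_\rho}]$-basis via these two formulas and matching coefficients of $[{\mathcal O}_{X_\rho}]$ then produces an initial relation between the coefficients $E$ and the coefficients $C$.

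The principal obstacle is the combinatorial cleanup: the naive expansion yields a triple sum indexed by $(\rho, S_\lambda, S_\mu)$ with signs $(-1)^{|S_\lambda|+|S_\mu|}$, whereas $(\star)$ is a single sum. Collapsing the triple sum into $(\star)$ will require a sign-cancellation identity on the Boolean sublattice generated by the addable/removable corners of $\nu$, or equivalently a careful use of the dual-bases property of $\{[{\mathcal O}_{X_\lambda}]\}$ and $\{[\partial X_\mu]\}$ to extract the coefficient of $[\partial X_\nu]$ directly in the compact form $(\star)$. This is the heart of the proof; once $(\star)$ is secured, Theorem~\ref{thm:TY5main} finishes the argument as outlined above.
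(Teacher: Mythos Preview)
Your target identity $(\star)$ is exactly the identity the paper proves, and your deduction of the theorem from $(\star)$ via Theorem~\ref{thm:TY5main} (together with the bijection between subsets $P$ of removable corners and placements of $X$'s) is correct and matches the paper's argument verbatim. So the architecture of your proof is right.

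The gap is that you have not actually proved $(\star)$; you explicitly flag the ``combinatorial cleanup'' as the principal obstacle and leave it open. Your proposed route---expanding $[\partial X_\lambda]$ and $[\partial X_\mu]$ separately by inclusion-exclusion over addable corners, multiplying, and then trying to collapse the resulting triple sum---does lead to an identity, but not directly to $(\star)$: it produces
\[
E_{\lambda,\mu}^{\nu} \;=\; \sum_{\rho\subseteq\nu}\;\sum_{S_1,S_2}(-1)^{|S_1|+|S_2|}\,C_{\lambda\cup S_1,\,\mu\cup S_2}^{\rho},
\]
and reducing this to a single sum over $\overline\nu\mapsto\nu$ of $(-1)^{|\nu/\overline\nu|}C_{\lambda,\mu}^{\overline\nu}$ would require a nontrivial cancellation identity among the structure constants $C$ themselves. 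That is genuinely harder than necessary.

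The key idea you are missing, and which the paper uses, is that the change of basis from $[{\mathcal O}_{X_\alpha}]$ to $[\partial X_\alpha]$ is \emph{multiplication by a fixed ring element}: $[\partial X_\alpha]=(1-[{\mathcal O}_{X_{(1)}}])\cdot[{\mathcal O}_{X_\alpha}]$. Hence
\[
[\partial X_\lambda]\cdot[\partial X_\mu]
=(1-[{\mathcal O}_{X_{(1)}}])^2\,[{\mathcal O}_{X_\lambda}][{\mathcal O}_{X_\mu}]
=\sum_{\overline\nu}C_{\lambda,\mu}^{\overline\nu}\,(1-[{\mathcal O}_{X_{(1)}}])^2[{\mathcal O}_{X_{\overline\nu}}].
\]
One factor of $(1-[{\mathcal O}_{X_{(1)}}])$ applied to $[{\mathcal O}_{X_{\overline\nu}}]$ gives $\sum_{\overline\nu\mapsto\nu}(-1)^{|\nu/\overline\nu|}[{\mathcal O}_{X_\nu}]$; the second factor then converts each $[{\mathcal O}_{X_\nu}]$ back to $[\partial X_\nu]$. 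Reading off the coefficient of $[\partial X_\nu]$ gives $(\star)$ immediately, with no cancellation required. This replaces your ``heart of the proof'' with a two-line computation.
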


Finally, using A.~Buch's observation $F_{\lambda,\mu}^{\nu}=D_{\lambda,\mu}^{\nu}$ (cf. Section~2.4),
one obtains an additional consequence of Theorems~\ref{thm:main} and~\ref{cor:main}:

\begin{Corollary}
\label{cor:main3}
$F_{\lambda,\mu}^{\nu}$ is computed by the same combinatorial rule as for $D_{\lambda,\mu}^{\nu}$ from Theorem~\ref{cor:main}.
\end{Corollary}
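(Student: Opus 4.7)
The plan is essentially immediate: combine Buch's identity with our rule for $D_{\lambda,\mu}^{\nu}$. Specifically, once the identity $F_{\lambda,\mu}^{\nu}=D_{\lambda,\mu}^{\nu}$ is established (this is what Section~2.4 is devoted to, so I would simply cite it), the corollary is obtained by substituting this equality into Theorem~\ref{cor:main}.

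More concretely, first I would recall (or prove, if not done in Section~2.4) the equality $F_{\lambda,\mu}^{\nu}=D_{\lambda,\mu}^{\nu}$. The geometric content is that the direct sum map $\psi\colon Y\times Z \to X$ behaves in parallel ways on the two dual bases $\{[\mathcal{O}_{X_\nu}]\}$ and $\{[\partial X_\nu]\}$; concretely, one checks that the pullback matrix of $\psi^\star$ in the boundary ideal sheaf basis coincides with that in the structure sheaf basis, by using the pairing between the two bases together with a compatibility of $\psi^\star$ with the natural pairings on $Y\times Z$ and on $X$. This is the content of Buch's observation alluded to in the introduction.

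Second, given the identity $F_{\lambda,\mu}^{\nu}=D_{\lambda,\mu}^{\nu}$, Theorem~\ref{cor:main} asserts that $D_{\lambda,\mu}^{\nu}$ equals $(-1)^{|\lambda|+|\mu|+|\nu|}$ times the number of $T\in {\tt INC}(\lambda\star\mu)$ with ${\tt Krect}(T)=U$, for any fixed $U\in {\tt INC}(\nu)$; by Theorem~\ref{thm:main}(II) the count is independent of the choice of $U$. Substituting $F_{\lambda,\mu}^{\nu}$ for $D_{\lambda,\mu}^{\nu}$ yields precisely the statement of Corollary~\ref{cor:main3}.

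There is no real obstacle; the only point where care is needed is ensuring that Buch's identity $F_{\lambda,\mu}^{\nu}=D_{\lambda,\mu}^{\nu}$ is in hand, which is already addressed in Section~2.4 of this paper. Hence the proof reduces to a single-sentence deduction from that identity and Theorem~\ref{cor:main}.
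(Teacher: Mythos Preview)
Your proposal is correct and matches the paper's approach: the paper proves Buch's identity $F_{\lambda,\mu}^{\nu}=D_{\lambda,\mu}^{\nu}$ in Section~2.4 (via $[\partial X_\nu]=(1-[\mathcal O_{X_{(1)}}])[\mathcal O_{X_\nu}]$ and multiplicativity of $\psi^\star$) and then simply invokes Theorems~\ref{thm:main} and~\ref{cor:main}. Your parenthetical sketch of the identity via dual pairings is a bit vaguer than the paper's explicit computation, but since you correctly defer to Section~2.4, the deduction is the same.
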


\subsection{Organization}
In Section~2, we investigate the geometry of the direct sum map and give a new
proof of the basic identity we use to relate the $K$-theory splitting
coefficients to the
$K$-theory product. This involves a discussion of
\emph{Schubert boundary ideal
sheaves}, which we then use to prove Theorem~\ref{thm:main3} and Corollary~\ref{cor:main3}.
In Section~3, we prove Theorems~\ref{thm:main} and~\ref{cor:main}. Section~4
contains examples. Section~5 explains why Theorem~\ref{thm:main} is sharp.
Lastly, in Section~6 we discuss a new product on increasing tableaux.

\section{The direct sum map, Richardson varieties, and boundary ideal sheaves}

\subsection{The image of the direct sum map}
We now discuss the geometry of the direct sum map from Section~1.1, which
defines the splitting coefficients. We show that the image of any product of Schubert varieties
is a Richardson variety and apply this fact to geometrically interpret and prove an identity we
need in the proof of Theorem~\ref{cor:main}.

It is convenient for this purpose to identify a Young diagram $\lambda\subseteq \Lambda$
with a subset
\[{\overline \lambda}=\{{\overline \lambda}_1<{\overline \lambda}_2<\ldots<{\overline \lambda}_k\} \subseteq \{1,2,3,\ldots,n\}.\]
Up to a choice of
conventions, the bijection is standard: we trace the boundary of $\lambda$ sitting in $\Lambda$ by walking
from the northeast corner to the southwest corner of $\Lambda$. We include the integer $i$ in
${\overline \lambda}$ if the $i$-th step of the above walk is a ``down'' step.

Fix a choice of reference flags
\[A_{\bullet}: A_0\subset A_1\subset \ldots \subset A_{n_1}\cong {\mathbb C}^{n_1}\]
where $A_i=\langle e_1,e_2,\ldots,e_i\rangle$ and $\{e_1,\ldots, e_{n_1}\}$
is the standard basis of ${\mathbb C}^{n_1}$, and
\[B_{\bullet}: B_0\subset B_1\subset \ldots \subset B_{n_2}\cong {\mathbb C}^{n_2},\]
where $B_j=\langle f_1,f_2,\ldots, f_{j}\rangle$ and $\{f_1,\ldots,f_{n_2}\}$
is the standard basis of ${\mathbb C}^{n_2}$.

Now define corresponding {\bf Schubert varieties}
\[Y_{\lambda}=\left\{W\in Y\  | \ \dim(W\cap A_{{\overline \lambda}_t})\geq t,\ 1\leq t\leq k_1\right\}\subseteq Y\]
and
\[Z_{\mu}=\left\{W\in Z\  | \ \dim(W\cap B_{\overline\mu _t})\geq t,\ 1\leq t\leq k_2\right\}\subseteq Z.\]
Our conventions imply that the dimensions of $Y_{\lambda}$ and
$Z_{\mu}$ are $|\lambda^{\vee}|$ and
$|\mu^{\vee}|$ respectively. Here $\lambda^{\vee}$ is the 180 degree rotation of the complement of $\lambda$ in the rectangle
$k_1\times (n_1-k_1)$, while one defines $\mu^{\vee}$ similarly, with respect to the rectangle $k_2\times (n_2-k_2)$.

Now consider the following reference flag $C_{\bullet}$
for ${\mathbb C}^{n_1}\oplus {\mathbb C}^{n_2}\cong {\mathbb C}^{n_1+n_2}$
\begin{multline}\nonumber
C_0=A_0\oplus B_0 \subset C_1=A_1\oplus B_0 \subset C_2=A_2\oplus B_0 \subset \ldots \subset C_n=A_n\oplus B_0 \cong {\mathbb C}^{n_1}\\ \nonumber \subset C_{n+1}=A_n\oplus B_1 \subset C_{n+2}=A_n\oplus B_2 \subset \ldots\subset
                                        C_{n+m}=A_n\oplus B_m\cong {\mathbb C}^{n_1+n_2},
\end{multline}
which is the standard flag with respect to the following basis of ${\mathbb C}^{n_1}\oplus {\mathbb C}^{n_2}$:
\begin{equation}
\label{eqn:thebasis}
\{e_1\oplus {\vec 0}, e_2\oplus {\vec 0},\ldots, e_{n_1}\oplus {\vec 0},
{\vec 0}\oplus f_1,{\vec 0}\oplus f_2,\ldots, {\vec 0}\oplus f_{n_2}\}.
\end{equation}
Similarly, let $X_{\nu}$ be the Schubert subvariety of $X\cong Gr(k_1+k_2,{\mathbb C}^{n_1}\oplus {\mathbb C}^{n_2})$
indexed by $\nu$ with respect to the above reference flag.

Let $\mu\oslash\lambda$ be the straight shape
Young diagram obtained by placing $\lambda$ to the right
of the $k_1\times (n_2-k_2)$ rectangle, and $\mu$ below the said rectangle. (Compare with
the definition of the skew shape $\mu\star\lambda$ given after Theorem~\ref{thm:main}.)
Note that the
resulting shape fits inside $\Lambda=k\times(n-k)=(k_1+k_2)\times (n_1+n_2-k_1-k_2)$, i.e., it indexes
a Schubert variety of $X$. Also,
\[\overline{\mu\oslash\lambda}= {\overline \lambda_1}<{\overline \lambda_2}<\ldots
<{\overline \lambda_{n_1}}<n_1+{\overline \mu_1}<n_1+{\overline \mu_2}<\ldots < n_1+
{\overline \mu_{n_2}}.\]
Hence, we have
\[X_{\mu\oslash\lambda}=\left\{W\in X \ | \ \dim(W\cap C_{({\overline{\mu\oslash\lambda}})_{t}})\geq t, 1\leq t\leq k\right\}.\]

We also need the {\bf opposite Schubert variety} $X^{\nu}$,
which is the Schubert variety defined to the
{\bf opposite reference flag}
\[\langle \vec 0\oplus \vec 0\rangle\subset
C_{1}^{opp}\subset C_{2}^{opp}\subset \ldots\subset C_{n_1+n_2}^{opp}\cong {\mathbb C}^{n_1+n_2}\]
where $C_{j}^{opp}$ is the subspace spanned by the \emph{last} $j$ vectors in the basis
(\ref{eqn:thebasis}), and defined with respect to the subset
\[\{n-{\overline{\nu}}_{k-t+1} +1 \ | \ 1\leq t\leq k\}\subseteq \{1,2,\ldots,n\}\]
(this is the subset obtained by reading the boundary of $\nu$ from the southwest to the northeast).
That is,
\[X^{\nu}=\{W\in X\ | \ \dim(W\cap C^{opp}_{n-{\overline{\nu}}_{k-t+1} +1})\geq t, 1\leq t\leq k\}.\]

In particular, we will interested in the opposite Schubert variety for the Young diagram
\[\Omega=((n-k)^{k_1},(n_2-k_2)^{k_2}),\]
i.e., the shape obtained by removing
the southeast $k_2\times (n_1-k_1)$ subrectangle from $\Lambda$.

In general, the intersection $X_{\gamma}\cap X^{\delta}$ is nonempty if and only if and only if $\gamma\subseteq \delta$.
In that case, the intersection is reduced and irreducible, and is called the
{\bf Richardson variety} $X_{\gamma}^{\delta}$. It is also true that
\begin{equation}
\label{eqn:richardsondim}
\dim(X_{\gamma}^{\delta})=|\delta|-|\gamma|.
\end{equation}

We are now ready to record the following fact, that shows
$\psi$ maps a product of Schubert
varieties isomorphically to a Richardson variety.
\begin{Proposition}
\label{prop:directsumgeometry}
The direct sum map induces an isomorphism
\[\psi:Y_{\lambda}\times Z_{\mu}\subseteq Y\times Z\  \tilde{\rightarrow} \ X_{\mu\oslash\lambda}^{\Omega}\subseteq X.\]
\end{Proposition}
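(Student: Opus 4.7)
The plan is to verify three things: (i) that $\psi$ sends $Y_\lambda\times Z_\mu$ set-theoretically into the Richardson variety $X_{\mu\oslash\lambda}^\Omega$; (ii) that every $W\in X_{\mu\oslash\lambda}^\Omega$ has a unique preimage in $Y_\lambda\times Z_\mu$; and (iii) that the inverse assignment is regular, yielding an isomorphism.

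For (i), fix $(V_1,V_2)\in Y_\lambda\times Z_\mu$ and let $W=V_1\oplus V_2$. Since $\overline{\lambda}_t\le n_1$ for $1\le t\le k_1$, the subspace $C_{\overline{\lambda}_t}=A_{\overline{\lambda}_t}\oplus 0$ satisfies $W\cap C_{\overline{\lambda}_t}=V_1\cap A_{\overline{\lambda}_t}$, of dimension $\ge t$. Likewise $W\cap C_{n_1+\overline{\mu}_s}=V_1\oplus(V_2\cap B_{\overline{\mu}_s})$ has dimension $\ge k_1+s$. Together these are precisely the Schubert conditions for $X_{\mu\oslash\lambda}$. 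A direct calculation gives $\overline{\Omega}=\{1,\ldots,k_1,\,n_1+1,\ldots,n_1+k_2\}$; each of the corresponding opposite Schubert conditions holds automatically for $W=V_1\oplus V_2$, using $V_2\subseteq C^{opp}_{n_2}$ together with the standard bound $\dim(V_1\cap A')\ge \dim V_1+\dim A'-n_1$ inside $\mathbb{C}^{n_1}$.

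For (ii), suppose $W\in X_{\mu\oslash\lambda}^\Omega$. Two conditions dominate: $\dim(W\cap C_{n_1})\ge k_1$ (extracted from the $t=k_1$ condition of $X_{\mu\oslash\lambda}$, since $C_{\overline{\lambda}_{k_1}}\subseteq C_{n_1}$) and $\dim(W\cap C^{opp}_{n_2})\ge k_2$ (from $X^\Omega$ at $t=k_2$). Since $C_{n_1}\cap C^{opp}_{n_2}=0$, the subspaces $V_1:=W\cap C_{n_1}$ and $V_2:=W\cap C^{opp}_{n_2}$ are independent, and a dimension count inside $W$ forces $W=V_1\oplus V_2$ with $V_1\in Y$ and $V_2\in Z$. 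The remaining Schubert conditions then specialize, via the identifications established in (i), to the defining conditions of $Y_\lambda$ and $Z_\mu$, so $(V_1,V_2)\in Y_\lambda\times Z_\mu$ is the unique preimage. For (iii), the assignment $W\mapsto (W\cap C_{n_1},W\cap C^{opp}_{n_2})$ is a morphism on the open locus of $X$ where both intersections have dimension exactly $k_1$ and $k_2$, and this locus contains $X_{\mu\oslash\lambda}^\Omega$ by (ii), giving a regular inverse to $\psi$.

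A dimension count confirms consistency: $\dim(Y_\lambda\times Z_\mu)=|\lambda^\vee|+|\mu^\vee|=k_1(n_1-k_1)+k_2(n_2-k_2)-|\lambda|-|\mu|$, which equals $|\Omega|-|\mu\oslash\lambda|=\dim X_{\mu\oslash\lambda}^\Omega$ by (\ref{eqn:richardsondim}); and $\mu\oslash\lambda\subseteq\Omega$ (immediate from $\lambda\subseteq k_1\times(n_1-k_1)$ and $\mu\subseteq k_2\times(n_2-k_2)$) ensures the Richardson variety is nonempty. The main obstacle is the bookkeeping in (i): one must correctly transcribe $\overline{\Omega}$ and then check, one condition at a time, that each of the $k$ opposite Schubert conditions either follows from $\dim(W\cap C^{opp}_{n_2})\ge k_2$ combined with $\dim W=k$, or becomes automatic once $W$ is split. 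Everything else reduces to routine linear algebra.
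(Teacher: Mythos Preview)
Your argument is correct, but takes a different route from the paper's. The paper argues more tersely: $\psi$ is injective, its image lies in $X_{\mu\oslash\lambda}^{\Omega}$ (your part (i)), the dimensions agree, and the Richardson variety is irreducible---hence the closed image is all of $X_{\mu\oslash\lambda}^{\Omega}$, and the bijective morphism is an isomorphism. You instead build an explicit regular inverse $W\mapsto(W\cap C_{n_1},\,W\cap C^{opp}_{n_2})$, showing directly in (ii) that every point of the Richardson variety splits uniquely.

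What your approach buys is self-containment: the paper's last step implicitly relies on the fact that a bijective morphism onto a normal variety (Richardson varieties are normal) is an isomorphism, whereas you avoid this by exhibiting the inverse. The paper's approach is shorter and leans on standard structural facts about Richardson varieties. One small wording issue: the locus in $X$ where $\dim(W\cap C_{n_1})=k_1$ and $\dim(W\cap C^{opp}_{n_2})=k_2$ is locally closed rather than open in $X$; what matters for your argument (and what you in fact prove in (ii)) is that these dimensions are constant on $X_{\mu\oslash\lambda}^{\Omega}$, so the intersection maps are morphisms there. This does not affect the validity of (iii).
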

\begin{proof}
The direct sum map is clearly injective, with image contained inside
$X_{\mu\oslash\lambda}$, by the definitions. Similarly, the
image is contained inside $X^{\Omega}$ since \emph{a fortiori} even the image of
$Y\times Z$ is. Moreover,
$\dim(Y_{\lambda}\times Z_{\mu})=k_1(n_1-k_1)-|\lambda| + k_2(n_2-k_2)-|\mu|$,
which is the dimension of $X_{\mu\oslash\lambda}^{\Omega}$, by (\ref{eqn:richardsondim}).
Finally, since any Richardson
variety is irreducible, the
isomorphism claim follows.
\end{proof}

\subsection{Implications in cohomology}
We will now use the isomorphism we have just established to obtain an identity
between the cohomological product coefficients and the splitting coefficients.
We use some basic facts from (co)homology theory on general algebraic varieties,
see, e.g., \cite[Appendix B]{Fulton}, and on flag varieties in particular, see, e.g.,
\cite{Brion:notes} and \cite{Fulton}.

Proposition~\ref{prop:directsumgeometry} implies the Gysin homomorphism pushforward $\psi_{\star}$ on cohomology
\[\psi_{\star}:H^{\star}(Y)\otimes H^{\star}(Z)(\cong H^{\star}(Y\times Z))
  \to   H^{\star}(X)\]
is given by
\begin{eqnarray}\nonumber
[Y_{\lambda}]\otimes [Z_{\mu}] \mapsto [X_{\mu\oslash\lambda}^{\Omega}] & = &
[X_{\mu\oslash\lambda}]\cup [X^{\Omega}]\\ \nonumber
& = & [X_{\mu\oslash\lambda}]\cup [X_{\Omega^{\vee}}]\\ \nonumber
& = &\sum_{\nu^{\vee}}c_{\mu\oslash\lambda, \Omega^{\vee}}^{\nu^{\vee}}[X_{\nu^{\vee}}].\nonumber
\end{eqnarray}
From this pushforward, we can deduce the pull-back map $\psi^{\star}$ on cohomology.

The cohomology class
\[[X_{\nu}]\in H^{2|\nu|}(X)\]
is Poincar\'{e} dual to the cohomology class
\[[X_{\nu^{\vee}}]=[X^{\nu}]\in H^{\dim(X)-2|\nu|}(X).\]
Similarly, the cohomology class
\[[Y_{\lambda}]\otimes [Z_{\mu}]\in H^{2|\lambda|}(Y)\otimes H^{2|\mu|}(Z)\]
is Poincar\'{e} dual to
\[[Y_{\lambda^{\vee}}]\otimes [Z_{\mu^{\vee}}]\in H^{\dim(Y)-2|\lambda|}(Y)\otimes H^{\dim(Z)-2|\mu|}(Z).\]

Recall the {\bf projection formula}: for any continuous map $g:B\to A$ we have
\begin{equation}
\label{eqn:cohomproj}
g_{\star}(g^{\star}(\alpha)\cup \beta)=\alpha\cup g_{\star}(\beta),
\end{equation}
where $\alpha\in H^i(A)$ and $\beta\in H^{j}(B)$, and here $g_{\star}$ is the Gysin homomorphism
pushing-forward cohomology.

We will apply (\ref{eqn:cohomproj}) in the case
\[B=Y\times Z, \ \ A=X, \ \ g=\psi, \ \ \alpha=[X_{\nu}], \ \
\beta=[Y_{\lambda^{\vee}}]\otimes [Z_{\mu^{\vee}}].\]
This gives an equality of multiples of the point class $[pt]\in H^{\dim(X)}(X)$, and therefore if
\[\psi^{\star}([X_{\nu}])=\sum_{\nu}d_{\lambda,\mu}^{\nu}
[Y_{\lambda}]\otimes [Z_{\mu}]\]
where $d_{\lambda,\mu}^{\nu}$ is the (cohomological) splitting coefficient, then we have
\begin{equation}
\label{eqn:homologyequality}
d_{\lambda,\mu}^{\nu}=c_{\mu^{\vee}\oslash\lambda^{\vee}, \Omega^{\vee}}^{\nu^{\vee}}
=c_{\Omega^{\vee},\nu}^{(\mu^{\vee}\oslash\lambda^{\vee})^{\vee}}.
\end{equation}
Here the last equality follows from the $S_3$-symmetry of the Littlewood-Richardson coefficients.
Now, $(\mu^{\vee}\oslash\lambda^{\vee})^{\vee}$ is precisely the shape obtained by placing
$\lambda$ below, and $\mu$ to the right of $\Omega^{\vee}(=k_2\times (n_1-k_1))$. If we call this shape
$\lambda\dagger\mu$ (compare with the definition of $\oslash$ in Section~2.1) then we obtain
\begin{equation}
\label{eqn:dagger}
d_{\lambda,\mu}^{\nu}=c_{\Omega^{\vee},\nu}^{\lambda\dagger\mu}.
\end{equation}
On its face, this formula expresses an arbitrary cohomological splitting coefficient
as a classical Littlewood-Richardson coefficient.
Alternatively, if we accept the truth of the fact that
that $d_{\lambda,\mu}^{\nu}=c_{\lambda,\mu}^{\nu}$, we can view this equation
as expressing a known identity
among the classical Littlewood-Richardson coefficients.

\subsection{Extension to $K$-theory}

The argument of the previous subsection to deduce (\ref{eqn:homologyequality}) can be extended to
$K$-theory, but we separate them in order to emphasize salient differences between the two cases. We need some facts about $K$-theory of flag varieties due to A.~Knutson. These were reported in \cite{Buch:KLR}
and also are part of the course notes \cite{Brion:notes}.

Let
\[\rho_{Y}:Y\to \{\star\}\]
be the map to a point. The pushforward
\[(\rho_{Y})_{\star}:K^{0}(Y)\to K^{0}(\{\star\})\cong {\mathbb Z}\]
on $K$-cohomology (as identified with $K$-homology) defines a perfect pairing
\[\langle p,q\rangle_{Y}=(\rho_{Y})_{\star}(p\cdot q),  \ \mbox{ \ for \ } p,q\in K^{0}(Y).\]

The same definition can be made in cohomology.  The Schubert basis for
cohomology is self-dual with respect to this pairing.
Contrastingly, in $K$-theory, there are two natural bases which are dual
to each other: the basis of structure sheaves, $[{\mathcal O}_{X_\lambda}]$,
and the basis of ideal sheaves of the boundaries defined by:
$\partial X_{\lambda}:=X_{\lambda}\setminus X_{\lambda}^{\circ}$,
where $X_{\lambda}^{\circ}$ is the open Schubert cell indexed by $\lambda$. Specifically, $[{\mathcal O}_{X_{\lambda}}]$ is dual to
$[\partial X_{\lambda^{\vee}}]$.

The relation between the two bases is given by
\begin{equation}
\label{eqn:basisrelation}
[\partial X_{\lambda}]=(1-[{\mathcal O}_{X_{(1)}}])[{\mathcal O}_{X_{\lambda}}]
=\sum_{\lambda\mapsto {\widetilde \lambda}}
(-1)^{|{\widetilde\lambda}/{\lambda}|}[{\mathcal O}_{X_{{\widetilde \lambda}}}]\in K^0(X_1)
\end{equation}
where $\lambda\mapsto {\widetilde \lambda}$ means that ${\widetilde \lambda}/\lambda$ is a skew shape consisting of some (non-negative) number of boxes,
no two of which are in the same row or column.

The inverse relation is:
\[[{\mathcal O}_{X_{\lambda}}]=\sum_{\lambda\subseteq \alpha} [\partial X_{\alpha}].\]

Combining Proposition~\ref{prop:directsumgeometry} and \cite[Section~3.4 \& Lemma~4.1.1]{Brion:notes},
it follows that the pushforward on $K$-cohomology is defined by
\begin{eqnarray}
\label{eqn:pushforward}
\psi_{\star}([{\mathcal O}_{Y_{\lambda}}]\otimes [{\mathcal O}_{Z_{\mu}}]) & = &
[{\mathcal O}_{X_{\mu\oslash\lambda}\cap X^{\Omega}}]\\ \nonumber
& = & [{\mathcal O}_{X_{\mu\oslash\lambda}}][{\mathcal O}_{X^{\Omega}}]\\ \nonumber
& = & [{\mathcal O}_{X_{\mu\oslash\lambda}}][{\mathcal O}_{X_{\Omega^{\vee}}}]\\ \nonumber
& = & \sum_{\nu^{\vee}}
C_{\mu\oslash\lambda, \Omega^{\vee}}^{\nu^{\vee}}[{\mathcal O}_{X_{\nu^{\vee}}}].\nonumber
\end{eqnarray}
One still has a projection formula, valid for any proper morphism $g:B\to A$, such as $\psi$:
\[g_{\star}((g^{\star}\alpha)\cdot\beta)=\alpha\cdot g_{\star}\beta.\]
Moreover, we are interested in hitting both sides with $\rho_{\star}$:
\begin{equation}
\label{eqn:Kproj}
\rho_{\star}\circ g_{\star}((g^{\star}\alpha)\cdot\beta)=\rho_{\star}(\alpha\cdot g_{\star}\beta).
\end{equation}

Applying the projection formula (\ref{eqn:Kproj}) with
\[B=Y\times Z, \ \ A=X, \ \ g=\psi, \ \ \alpha=[{\mathcal O}_{X_{\nu}}],  \ \ \beta=[\partial{Y_{\lambda^{\vee}}}]\otimes [\partial{Z_{\mu^{\vee}}}],\]
we obtain
\begin{equation} \label{explicit.eq}
((\rho_X)_\star\circ \psi_\star)(\psi^\star(\mathcal [{\mathcal O}_{X_\nu}]) \cdot [\partial{Y_{\lambda^{\vee}}}]\otimes [\partial{Z_{\mu^{\vee}}}]) = (\rho_X)_\star(\mathcal [{\mathcal O}_{X_\nu}]\cdot \psi_\star([\partial{Y_{\lambda^{\vee}}}]\otimes [\partial{Z_{\mu^{\vee}}}])
\end{equation}

Note that the map $\rho_{Y\times Z}:Y\times Z \to \{\star\}$
clearly factors as $\rho_{X}\circ \psi$. Thus, the pushforward $(\rho_{X})_{\star}\circ \psi_{\star}$
defines an inner product for which
\[\{[{\mathcal O}_{Y_{\lambda}}]\otimes [{\mathcal O}_{Z_{\mu}}]\}
\mbox{ \ and \ }
\{[\partial{Y_{\lambda^{\vee}}}]\otimes [\partial{Z_{\mu^{\vee}}}]\}\]
are dual bases of
\[K^{0}(Y\times Z)\cong K^{0}(Y)\otimes K^{0}(Z).\]
With this in mind, it is automatic that
the left hand side of (\ref{explicit.eq}) equals $D_{\lambda,\mu}^{\nu}$.

To compute the righthand side of (\ref{explicit.eq}) we use
\begin{Lemma}
\label{lemma:pushforwardfact}
\[\psi_{\star}(\beta)=[\partial{X_{\mu^{\vee}\oslash\lambda^{\vee}}}][{\mathcal O}_{X_{\Omega^{\vee}}}].\]
\end{Lemma}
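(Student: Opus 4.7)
The plan is to expand both sides of the claimed identity as signed sums of structure-sheaf classes via (\ref{eqn:basisrelation}), push each structure-sheaf term forward using the already-established formula (\ref{eqn:pushforward}), and then match the two expansions by identifying a natural bijection of index sets. Expanding $[\partial{Y_{\lambda^{\vee}}}]$ and $[\partial{Z_{\mu^{\vee}}}]$ via (\ref{eqn:basisrelation}) and using linearity of $\psi_{\star}$ together with (\ref{eqn:pushforward}), I rewrite the left-hand side as a signed sum, indexed by pairs $(\tilde\lambda,\tilde\mu)$ with $\lambda^{\vee}\mapsto\tilde\lambda$ (a rook strip inside $k_1\times(n_1-k_1)$) and $\mu^{\vee}\mapsto\tilde\mu$ (a rook strip inside $k_2\times(n_2-k_2)$), of terms of the form $(-1)^{|\tilde\lambda/\lambda^{\vee}|+|\tilde\mu/\mu^{\vee}|}[{\mathcal O}_{X_{\tilde\mu\oslash\tilde\lambda}}][{\mathcal O}_{X_{\Omega^{\vee}}}]$. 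Expanding the right-hand side by a single application of (\ref{eqn:basisrelation}) to $[\partial{X_{\mu^{\vee}\oslash\lambda^{\vee}}}]$ gives a similar signed sum, but a priori indexed by partitions $\sigma$ with $\mu^{\vee}\oslash\lambda^{\vee}\mapsto\sigma$ inside the ambient rectangle $\Lambda$.

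The pivotal step is to show that every $\sigma$ on the right-hand side with $\sigma\not\subseteq\Omega$ contributes zero. Since $[{\mathcal O}_{X_{\Omega^{\vee}}}]=[{\mathcal O}_{X^{\Omega}}]$, the product $[{\mathcal O}_{X_{\sigma}}][{\mathcal O}_{X_{\Omega^{\vee}}}]$ equals $[{\mathcal O}_{X_{\sigma}\cap X^{\Omega}}]$ by the transversality-based product formula for Schubert and opposite Schubert classes used already in (\ref{eqn:pushforward}) (cf.~\cite{Brion:notes}). The Richardson intersection $X_{\sigma}\cap X^{\Omega}=X_{\sigma}^{\Omega}$ is empty precisely when $\sigma\not\subseteq\Omega$, giving the desired vanishing. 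I expect this geometric vanishing to be the main (non-bookkeeping) ingredient.

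It remains to match the surviving sums. Since $\Omega=((n-k)^{k_1},(n_2-k_2)^{k_2})$, any $\sigma$ with $\mu^{\vee}\oslash\lambda^{\vee}\subseteq\sigma\subseteq\Omega$ has its top $k_1$ rows of length between $n_2-k_2$ and $n-k$ and its bottom $k_2$ rows of length at most $n_2-k_2$; hence $\sigma$ decomposes uniquely as $\tilde\mu\oslash\tilde\lambda$, where $\tilde\lambda$ records the top $k_1$ rows of $\sigma$ shifted left by $n_2-k_2$ and $\tilde\mu$ records the bottom $k_2$ rows. The skew shape $\sigma/(\mu^{\vee}\oslash\lambda^{\vee})$ then splits as the disjoint union of (shifted) $\tilde\lambda/\lambda^{\vee}$ in rows $1,\ldots,k_1$ and columns $>n_2-k_2$, and $\tilde\mu/\mu^{\vee}$ in rows $k_1+1,\ldots,k$ and columns $\leq n_2-k_2$; these sit in disjoint rows and columns, so $\sigma/(\mu^{\vee}\oslash\lambda^{\vee})$ is a rook strip exactly when both $\tilde\lambda/\lambda^{\vee}$ and $\tilde\mu/\mu^{\vee}$ are, and the exponents of $-1$ agree. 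This gives a term-by-term bijection between the two signed sums and closes the proof.
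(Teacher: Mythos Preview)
Your proof is correct and follows essentially the same route as the paper's: expand both sides via (\ref{eqn:basisrelation}), push forward using (\ref{eqn:pushforward}), and identify the two index sets after killing the ``extra'' rook-strip terms on the right-hand side. The only cosmetic difference is that you phrase the key vanishing geometrically (the Richardson variety $X_{\sigma}^{\Omega}$ is empty when $\sigma\not\subseteq\Omega$), whereas the paper states the equivalent fact as $C_{\gamma,\delta}^{\kappa}=0$ whenever $\gamma$ and the $180^{\circ}$-rotation of $\delta$ overlap in $\Lambda$; also, you compare the two sides at the level of unexpanded products $[{\mathcal O}_{X_{\sigma}}][{\mathcal O}_{X_{\Omega^{\vee}}}]$ rather than expanding further into structure constants, which is slightly cleaner.
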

\begin{proof}
First note that by applying (\ref{eqn:basisrelation}) to $\beta$ we have
\begin{eqnarray}\nonumber
\beta=[\partial{Y_{\lambda^{\vee}}}]\otimes [\partial{Z_{\mu^{\vee}}}] &
= & \left(\sum_{\lambda^\vee\mapsto {\widetilde {\lambda^{\vee}}}}
(-1)^{|{\widetilde{\lambda^{\vee}}}/{\lambda^{\vee}}|}[{\mathcal O}_{Y_{{\widetilde {\lambda^{\vee}}}}}]\right)
\otimes
\left(\sum_{\mu^\vee\mapsto {\widetilde {\mu^{\vee}}}}
(-1)^{|{\widetilde{\mu^{\vee}}}/{\mu^{\vee}}|}[{\mathcal O}_{Z_{{\widetilde {\mu^{\vee}}}}}]\right)\\ \nonumber
& = & \sum_{\lambda^\vee\mapsto \widetilde{\lambda^{\vee}}, \ \mu^\vee\mapsto {\widetilde{\mu^{\vee}}}}
(-1)^{|{\widetilde{\lambda^{\vee}}}/{\lambda^{\vee}}|+|{\widetilde{\mu^{\vee}}}/{\mu^{\vee}}|}
[{\mathcal O}_{Y_{{\widetilde {\lambda^{\vee}}}}}]\otimes [{\mathcal O}_{Z_{{\widetilde {\mu^{\vee}}}}}].
\end{eqnarray}
Next, applying (\ref{eqn:pushforward}) to the above expression for $\beta$ gives:
\[\psi_{\star}(\beta)=\sum_{\lambda^{\vee}\mapsto \widetilde{\lambda^{\vee}}, \ \mu^{\vee}\mapsto {\widetilde{\mu^{\vee}}}}
(-1)^{|{\widetilde{\lambda^{\vee}}}/{\lambda^{\vee}}|+|{\widetilde{\mu^{\vee}}}/{\mu^{\vee}}|}
\sum_{\nu^{\vee}}
C_{{\widetilde{\mu^{\vee}}}\oslash {\widetilde{\lambda^{\vee}}}, {\Omega^{\vee}}}^{\nu^{\vee}}
[{\mathcal O}_{X_{\nu^{\vee}}}].
\]
However, the summation is equal to
\[\sum_{\mu^\vee\oslash\lambda^\vee\mapsto \widetilde{\mu^\vee\oslash\lambda^\vee}}
(-1)^{|\widetilde{\mu^\vee\oslash\lambda^\vee}/\mu^\vee\oslash\lambda^\vee|}
\sum_{\nu^{\vee}}
C_{\widetilde{\mu^\vee\oslash\lambda^\vee},\Omega^{\vee}}^{\nu^{\vee}}[{\mathcal O}_{X_{\nu^{\vee}}}],
\]
where we have used the fact that $C_{\gamma,\delta}^{\kappa}=0$ if $\gamma$ and the 180 degree rotation of
$\delta$ overlap in~$\Lambda$.

On the other hand, the latter expression is precisely equal to the result of
expanding $[\partial{X_{\mu^{\vee}\oslash\lambda^{\vee}}}][{\mathcal O}_{X_{\Omega^{\vee}}}]$,
after applying
 (\ref{eqn:basisrelation}) to $[\partial{X_{\mu^{\vee}\oslash\lambda^{\vee}}}]$.
\end{proof}

Thus, the right hand side of (\ref{explicit.eq}) is:
\[(\rho_X)_\star([{\mathcal O}_{X_\nu}]\cdot \psi_\star([\partial{Y_{\lambda^{\vee}}}]\otimes [\partial{Z_{\mu^{\vee}}}])
%\[\rho_{\star}(\alpha\cdot g_{\star}\beta)
=\rho_{\star}([{\mathcal O}_{X_{\nu}}][\partial{X_{\mu^{\vee}\oslash\lambda^{\vee}}}][{\mathcal O}_{X_{\Omega^{\vee}}}])
=C_{\Omega^\vee,\nu}^{(\mu^{\vee}\oslash\lambda^{\vee})^{\vee}}=C_{\Omega^{\vee},\nu}^{\lambda\dagger\mu}\]
where $\lambda\dagger\mu$ is defined just before (\ref{eqn:dagger}).

Summarizing, we have geometrically explained the following identity:
\begin{equation}
\label{eqn:prodcoprodreln}
D_{\lambda,\mu}^{\nu}=C_{\Omega^\vee,\nu}^{\lambda\dagger\mu}.
\end{equation}
Up to minor differences, this identity was earlier obtained
by A.~Buch \cite{Buch:KLR}, via the combinatorics of Grothendieck polynomials. It should also be said that versions of Proposition~\ref{prop:directsumgeometry} and Lemma~\ref{lemma:pushforwardfact} were proved
in \cite{Lenart.Robinson.Sottile} in the study of an analogue of the direct sum map on a product of flag varieties to a larger
flag variety. They obtain a class of identities among $K$-theoretic flag structure constants, which can be thought
of as generalizations of~(\ref{eqn:prodcoprodreln}).

\subsection{Proofs of Theorem~\ref{thm:main3} and Corollary~\ref{cor:main3}}
We stated that $[\partial X_{\lambda}]=(1-[{\mathcal O}_{X_{(1)}}])[{\mathcal O}_{X_{\lambda}}]$.
Hence
\[[\partial X_{\lambda}][\partial X_{\mu}]=[{\mathcal O}_{X_{\lambda}}][{\mathcal O}_{X_{\mu}}](1-[{\mathcal O}_{X_{(1)}}])^2
=\sum_{\overline\nu}C_{\lambda,\mu}^{\overline\nu}\left([{\mathcal O}_{X_{\overline\nu}}]\cdot (1-[{\mathcal O}_{X_{(1)}}])^2\right).\]
Now, we have
\[(1-[{\mathcal O}_{X_{(1)}}])[{\mathcal O}_{X_{\overline\nu}}]=\sum_{\overline\nu\mapsto {\nu}}(-1)^{|{\nu}|-|\overline\nu|}[{\mathcal O}_{X_{{\nu}}}].\]

It then follows from the previous two equations that
\[[\partial X_{\lambda}][\partial X_{\mu}]=\sum_{\overline\nu\subseteq \Lambda}C_{\lambda,\mu}^{\overline\nu}\sum_{\overline\nu\mapsto {\nu}}
(-1)^{|{\nu}|-|\overline\nu|}[\partial X_{{\nu}}].\]
Therefore,
\[E_{\lambda,\mu}^{\nu}=\sum_{{\overline \nu}\mapsto \nu}C_{\lambda,\mu}^{{\overline \nu}}(-1)^{|\nu/{\overline \nu}|}.\]
Now $C_{\lambda,\mu}^{\overline \nu}$ equals $(-1)^{|{\overline \nu}|-|\lambda|-|\mu|}$ times
the number of $T\in {\tt INC}({\overline \nu}/\lambda)$ that $K$-rectify to the superstandard tableau $S_{\mu}$.
Thus
$E_{\lambda,\mu}^{\nu}=(-1)^{|\nu|-|\lambda|-|\mu|}\sum_{{\overline \nu}\mapsto \nu}|{C}_{\lambda,\mu}^{{\overline \nu}}|$.
However, the union of the tableaux that contribute to some $C_{\lambda,\mu}^{\overline \nu}$ with ${\overline \nu}\mapsto \nu$
are in obvious bijection with the tableaux of ${\widehat {\tt INC}}(\nu/\lambda)$, hence Theorem~\ref{thm:main3} follows.

The above argument can be extended to
minuscule $G/P$. One can thereby deduce a conjectural rule for
multiplying Schubert boundary ideal sheaves for these spaces, from the
$K$-theory minuscule conjecture of \cite{Thomas.Yong:V}. This point is actually used
in \cite{Thomas.Yong:X} where we deduce such a rule for odd orthogonal Grassmannians $OG(n,2n+1)$.

A.~Buch (private communication) noticed the following sequence of equalities:
\begin{eqnarray}
\psi^{\star}([\partial X_{\nu}]) & = & \psi^{\star}\left((1-[{\mathcal O}_{X_{(1)}}])[{\mathcal O}_{X_{\nu}}]\right)\\\nonumber
& = & \psi^{\star}((1-[{\mathcal O}_{X_{(1)}}]))\psi^{\star}([{\mathcal O}_{X_{\nu}}])\\\nonumber
& = & \left((1-[{\mathcal O}_{Y_{(1)}}])\otimes (1-[{\mathcal O}_{Z_{(1)}}])\right)
\left(\sum_{\lambda,\mu}D_{\lambda,\mu}^{\nu}[{\mathcal O}_{Y_{\lambda}}]\otimes
[{\mathcal O}_{Z_{\mu}}]\right)\\\nonumber
& = & \sum_{\lambda,\mu} D_{\lambda,\mu}^{\nu}[\partial Y_{\lambda}]\otimes [\partial Z_{\mu}].
\end{eqnarray}
Corollary~\ref{cor:main3} then follows from this observation, together with Theorems~\ref{thm:main} and~\ref{cor:main}.

\section{Proof of Theorems~\ref{thm:main} and~\ref{cor:main}}

\subsection{Strong $K$-dual equivalence for rectangles}

It will be useful to speak of {\bf reverse
$K{\tt jdt}$ slides}, obtained by moving into {\bf outer corners} of a skew shape
$\Lambda$, i.e., maximally northwest boxes of $\Lambda\setminus \nu$.
These slides can be thought of ordinary ``forward'' $K{\tt jdt}$ slides after rotating $\Lambda$
and shapes contained therein $180$-degrees and reversing the order on the
tableau's entries. Hence all claims and definitions involving $K{\tt jdt}$ have obvious
reverse analogues. With this said, we now will generically refer to $K{\tt jdt}$ as meaning either
forward or reverse slides, distinguishing them only when necessary by
referring explicitly to
${\tt rev}K{\tt jdt}$. See \cite[Section~1.1]{Thomas.Yong:V}.

$A,B\in {\tt INC}(\lambda)$ are {\bf $K$-dual equivalent}
if for any common sequence of $K{\tt jdt}$ slides
\begin{equation}
\label{eqn:Aseq}
A_0:=A\mapsto A_1\mapsto A_2 \mapsto \cdots \mapsto A_M
\end{equation}
\begin{equation}
\label{eqn:Bseq}
B_0:=B\mapsto B_1\mapsto B_2 \mapsto \cdots \mapsto B_M
\end{equation}
applied to $A$ and $B$ respectively results in
${\tt shape}(A_i)={\tt shape}(B_i) \mbox{\ for $0\leq i\leq M$.}$
If instead $A,B\in {\tt SYT}(\lambda)$ and one only uses ordinary
${\tt jdt}$ slides, then this would be M.~Haiman's definition of
(ordinary) dual equivalence \cite{Haiman}.

Consider an increasing tableau $T$, and a sequence of
slides applied to it. Instead of recording only the tableaux
which result from each successive slide, as in (\ref{eqn:Aseq}) and (\ref{eqn:Bseq}),
we trace the process in slow motion, and consider what we call the {\bf switch sequence}, that
is to say, the sequence of tableaux
\begin{equation}
\label{eqn:firsttabseq}
T=T_0 \mapsto T_1 \mapsto T_2\mapsto \dots
\end{equation}
where $T_i$ and $T_{i+1}$ are related by a single {\tt switch}. Example~\ref{exa:swseq} shows one
such {\tt switch} sequence, expanding upon a single $K{\tt jdt}$ slide. Also, we will discuss
the {\bf configuration} of a tableau appearing in a {\tt switch} sequence,
which is the shape given by the boxes filled with either numerical labels or $\bullet$'s.

\begin{Example}
Among the three tableaux
$\tableau{&&{1}&{\bullet}\\&{\bullet}&{3}\\{1}&{2}}, \
\tableau{&&{1}&{\bullet}\\&{\bullet}&{2}\\{2}&{3}},
 \mbox{\ \ and}
\tableau{&&{\bullet}&{1}\\&{\bullet}&{3}\\{1}&{2}},$
the first two have the same configuration, which is different from that of the third.\qed
\end{Example}

We now define a more demanding property then $K$-dual
equivalence. We define two tableaux $A$ and $B$ to be {\bf strongly $K$-dual equivalent} if,
when a common sequence of {\tt Kjdt} slides is applied to $A$ and $B$,
the configurations of  $A_i$ and $B_i$ are the same, for all $i$.

M.~Haiman \cite{Haiman} proved that $A,B\in {\tt SYT}(\lambda)$ are always
dual equivalent, provided $\lambda$ is a straight shape (and we only use
ordinary {\tt jdt} slides). However:

\begin{Example} While
$\tableau{{1}&{2}\\{3}} \mbox{\  and \ }\tableau{{1}&{3}\\{2}}$
are dual equivalent, they are not strongly dual equivalent, since we have the {\tt switch}es
\begin{equation}
\label{eqn:notstrong}
\tableau{{1}&{2}\\{3}&{\bullet}}\mapsto \tableau{{1}&{2}\\{\bullet}&{3}}\mbox{\ and \ }
\tableau{{1}&{3}\\{2}&{\bullet}}\mapsto\tableau{{1}&{\bullet}\\{2}&{3}},
\end{equation}
resulting in tableaux with different configurations.\qed
\end{Example}

On the other hand,
we saw in \cite{Thomas.Yong:V} that in general
$A,B\in {\tt INC}(\lambda)$ need not be $K$-dual equivalent.
%Moreover, in the ordinary setting, while
%$\tableau{{1}&{2}\\{3}}$ and $\tableau{{1}&{3}\\{2}}$ are dual equivalent,
%they are not strongly dual equivalent.
In contrast,
our key technical observation is the following result:
\begin{Theorem}
\label{thm:dualequiv}
All tableaux in ${\tt INC}(\lambda)$ are strongly $K$-dual equivalent
if and only if $\lambda$ is a rectangle $c\times d$.
\end{Theorem}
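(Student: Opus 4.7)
The plan has two directions.

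For the \emph{only if} direction, I exhibit counterexamples for any non-rectangular $\lambda$. The minimal case $\lambda = (2,1)$ is already demonstrated by equation (\ref{eqn:notstrong}) in the text: applying a reverse $K{\tt jdt}$ slide at the outer corner $(2,2)$ to the two tableaux $\tableau{1 & 2 \\ 3}$ and $\tableau{1 & 3 \\ 2}$ produces, after a single switch with label $3$, tableaux whose configurations differ, because the label $3$ sits to the left of the $\bullet$ in one case and above it in the other. For any other non-rectangular $\lambda$, I locate an \emph{interior outer corner} $(i+1, \lambda_{i+1}+1)$ arising from a strict decrease $\lambda_i > \lambda_{i+1}$; such a corner exists precisely when $\lambda$ is not a rectangle. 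I then embed the $(2,1)$-counterexample at this corner by constructing $A, B \in {\tt INC}(\lambda)$ that agree everywhere on $\lambda$ except at the two positions $(i, \lambda_{i+1}+1)$ and $(i+1, \lambda_{i+1})$, where their two largest labels are placed in opposite orders. A reverse slide starting at this interior outer corner locally reproduces the $(2,1)$ divergence, yielding different configurations.

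For the \emph{if} direction, let $\lambda = c \times d$ and take $A, B \in {\tt INC}(\lambda)$ under a common sequence of $K{\tt jdt}$ slides. I proceed by induction on the number of switches in the switch sequences, with the inductive hypothesis that at step $k$ the configurations of $A_k$ and $B_k$ agree — equivalently, the two tableaux have the same underlying skew shape of numerically-labeled cells (and therefore the same $\bullet$ positions in the occupied region). For the inductive step, a switch is determined by the alternating-ribbon structure of $\bullet$s together with a currently-processed label $j$: these cells form a union of alternating ribbons, and their symbols are interchanged. Since the $\bullet$ positions already match, the new configurations match as soon as we know that the $\bullet$-to-$j$ adjacency patterns are the same in $A_k$ and $B_k$.

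The main obstacle is verifying precisely this adjacency-matching. My plan is to show that in the rectangular-derived setting, the label structure around each $\bullet$ is forced by the shape alone, independent of the starting tableau. The $\bullet$s originate from (reverse) slides seeded at one of the two extreme outer corners $(1, d+1)$ or $(c+1, 1)$ of the rectangle (rectangularity is used here — these are the only outer corners), and I expect a local case analysis on the possible skew-shape boundaries that can arise to show that, at any stage, a label in a cell adjacent to a $\bullet$ participates in the current switch exactly when the shape dictates it should. The key point is the absence of an interior outer corner: no $\bullet$ can ever be simultaneously adjacent to two distinct cells whose labels could ``compete'' to form the ribbon, since such a configuration is exactly what produces the failure in the non-rectangular counterexamples above. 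This forces $A_k$ and $B_k$ to undergo identical switches and completes the induction.
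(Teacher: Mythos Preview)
Your \emph{only if} direction is fine and essentially matches the paper's argument: place two tableaux that disagree on the relative order of the labels in the boxes just north and just west of an interior outer corner, and a single reverse slide into that corner causes the bullet to move to different cells.

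Your \emph{if} direction has a real gap. The claim that ``no $\bullet$ can ever be simultaneously adjacent to two distinct cells whose labels could compete'' is only true at the very first step. A rectangle has just the two extreme outer corners you name, so the \emph{first} reverse slide is fine; but after that slide the shape is no longer rectangular, and subsequent slides into new outer corners certainly produce bullets with both a north and a west neighbour. At that point two tableaux $A_k$ and $B_k$ with the same configuration may have different labels in those two cells, and you must explain why the larger one sits in the \emph{same} position in both --- your proposal does not. Concretely, already for $\lambda = 2\times 2$ two reverse slides produce a bullet flanked on two sides, and the mere equality of configurations does not by itself tell you which flank carries the larger label.

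The paper closes this gap with a nontrivial invariant: it tracks, for each filled cell of $T_i$, its \emph{box of origin} in the starting rectangle, and proves by a three-part simultaneous induction (Claim~\ref{mainprop}) that (i) every {\tt switch} is \emph{uniform}, so boxes of origin remain well-defined; (ii) cells whose boxes of origin are horizontally (resp.\ vertically) adjacent remain in the correct relative position; and (iii) for any bullet, the boxes of origin of its north and west neighbours are $\searrow$-comparable. Part (iii) is exactly what forces the two neighbours to be consistently ordered in $A$ and $B$, independent of the actual label values, and it is the heart of the argument --- it is not a consequence of the configuration alone. The paper then needs a further reduction (Lemma~\ref{lemma:recttorect} and an induction on direction changes) to pass from reverse-only slide sequences to arbitrary ones; your proposal does not distinguish these cases either.
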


\begin{proof}
The proof of the ``$\Rightarrow$'' direction is to extend the example (\ref{eqn:notstrong}). If $\lambda$ is not rectangular,
consider any outer corner of $\lambda$ in a row $r$ where
$\lambda_{r-1}>\lambda_r$, as depicted in Figure~\ref{fig:forward} below (for $r=3$).
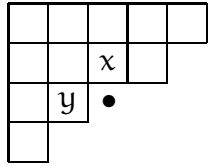
\begin{figure}[h]
\begin{picture}(100,50)
\put(40,36){\tableau{{\ }&{\ }&{\ }&{\ }&{\ }\\{\ }&{\ }&{x }&{ \ }\\{\ }&{
y }\\{ \ }}}
\put(75,11){$\bullet$}
\end{picture}
\caption{\label{fig:forward} Proof of ``$\Rightarrow$'' direction of Theorem~\ref{thm:dualequiv}}
\end{figure}

One can construct $A,B\in {\tt SYT}(\lambda)$ so that $A$ has
a smaller label in box ``$x$'' than it does in box ``$y$''
while $B$ has a smaller label in
box ``$y$'' than it does in box ``$x$''.
Then choose the $K{\tt jdt}$ slide moving into
the outer corner marked by $\bullet$ above. The first {\tt switch} which causes
any movement to take place will result in the $\bullet$ moving to different
squares in the two tableaux, so they are not strongly $K$-dual equivalent.

We now turn to the converse ``$\Leftarrow$'' direction.
This will require some preparation.
%First, we
%prove the result assuming
%that all of the $K{\tt jdt}$ slides are reverse slides. We will prove
%our claim by induction on the number of steps.  To present the
%exact claim which we will establish inductively, we need to introduce
%some further notation.

We wish to assign a {\bf box of origin} to each box $x$ of $T_i$ from
(\ref{eqn:firsttabseq}), denoted ${\mathfrak B}_i(x)$, which is a box of $T$. Speaking informally for the moment,
the idea is that, for a box $x$ of $T_i$, its box of origin is defined by
setting $\bor_i(x)$ to be the box of $T$ that the label in box $x$ evolved from through
generations of {\tt switch} operations. If all the labels of $T$ were different, then it would clearly be  possible
to do this: simply define the box of origin of $x$ to be the box of
$T$ containing the same label as $x$.  However, since $T$ may have repeated
entries, to make ${\mathfrak B}_i(x)$ well-defined, we must be more careful, as done below.

Formally, we will define the box of origin inductively.
For $T_0$, each box is its
own box of origin.
Assume the box of origin $\bor_{i-1}(x)$ is well-defined for each
box $x$ of $T_{i-1}$.  We say that the {\tt switch} from $T_{i-1}$ to
$T_i$ is {\bf uniform} if, for each short ribbon involved in the
{\tt switch}, the boxes containing numbers all have the same box of
origin.  In this case, for each short ribbon,
we define the box of origin of the {\tt switch}ed numbers
to be the same as the box of origin of the numbers before the {\tt switch}.
In general, there is nothing to guarantee that a sequence of {\tt switch}es
will all be uniform.  After a non-uniform {\tt switch}, the box of origin is
no longer well-defined. However, in the case that $T$ is of rectangular shape, we claim
this failure never occurs.

Let $\searrow$ be the order on the boxes of $c\times d$ where
\[x\searrow y \mbox{ \ if $x$ is weakly northwest of $y$.}\]
Note that if $x\searrow y$,
then in any increasing filling of $c \times d$, the label of $x$
must be weakly less than that of $y$.

%Recall that a collection of boxes form a {\bf horizontal strip} if
%they form a skew shape with no two boxes in the same column. They
%form a {\bf vertical strip} if they form a skew shape with no two
%boxes in the same row.

\begin{Claim}\label{mainprop}
Let $\lambda = c\times d$, and let $T\in {\tt INC}(\lambda)$.
Consider a sequence of reverse $K{\tt jdt}$ slides applied to $T$, and
its corresponding {\tt switch} sequence (\ref{eqn:firsttabseq}). Then for each
$i$ we have that:
\begin{enumerate}
\item[(I)] The {\tt switch} from $T_{i-1}$ to $T_i$ is uniform, and hence
boxes of origin are well-defined for~$T_i$.
\item[(II)] Given
entries $\alpha,\beta$ of $T_i$, if $\bor_i(\beta)$ is
to the right of $\bor_i(\alpha)$ and in the same row, then $\beta$ is
strictly
east and weakly north of $\alpha$. Also, the transposed version holds:
if $\bor_i(\beta)$ is south of $\bor_i(\alpha)$ and in the same column then
$\beta$ is strictly south and weakly west of~$\alpha$.
\item[(III)] For any bullet in $T_i$, if the entries to its immediate
north and west are
$\alpha,\beta$, then $\bor_i(\alpha)$ and $\bor_i(\beta)$
are comparable with respect to $\searrow$.
\end{enumerate}
\end{Claim}

\begin{proof}
We prove the claims by simultaneous induction on $i\geq 0$.

The assertions are all trivial for $i=0$.  Suppose we know them for
$i-1$.

$(I)_i$:  By definition, we need to show that all
the entries of any short ribbon involved in a {\tt switch} have the same
box of origin.  By $(III)_{i-1}$, we know that, for any bullet in
$T_{i-1}$,
either its north and west neighbors have the same box of origin, or their
boxes of origin are different but comparable by $\searrow$.  In the former case we are
satisfied, and in the latter case, their labels must be different, so at most one
of these two boxes will be involved in the {\tt switch} from $T_{i-1}$
to $T_i$.

$(II)_i$: It suffices to verify the assertion in the case that
$\bor_i(\alpha)$ and $\bor_i(\beta)$ are adjacent, since the general result follows by transitivity.  So suppose
that we have two boxes in $c\times d$ with $y$ immediately to the right of
$x$ (the proof of the transposed version is similar). If
\begin{equation}
\label{eqn:settingxandy}
\alpha,\beta \mbox{\ in $T_{i-1}$ with
$\bor_{i-1}(\alpha)=x$, $\bor_{i-1}(\beta)=y$,}
\end{equation}
then we know that $\beta$ strictly east and weakly
north of $\alpha$ by $(II)_{i-1}$.  Suppose that this
property
fails for $T_i$.  Then either:
\begin{itemize}
\item [(i)]$\alpha,\beta$
are in the same row in $T_{i-1}$, and the {\tt switch} moves $\beta$ south,
or
\item[(ii)]$\beta$ is in the column
immediately to the right of $\alpha$ in $T_{i-1}$
and the {\tt switch} moves $\alpha$
east.
\end{itemize}

Consider (i). Suppose $\alpha$ and $\beta$ are not adjacent in
 $T_{i-1}$ and let $\gamma$ be some entry between them in the same
row. Where is $\bor_{i-1}(\gamma)$?
Since the entry in $\gamma$ must be greater
than that in $\alpha$, we know $\bor_{i-1}(\gamma)\not \searrow x$.
Similarly,
$y \not\searrow \bor_{i-1}(\gamma)$. Hence $\bor_{i-1}(\gamma)$ is strictly north and strictly east of $x$, or
$\bor_{i-1}(\gamma)$ is strictly south and strictly west of $y$.

In the former case, let $w$ be the box in $c\times d$ with $w$ in the same row
as $x$ and in the same column as $\bor_{i-1}(\gamma)$.  Let $\delta$ be some
box in $T_{i-1}$ with $\bor_{i-1}(\delta)=w$.  Applying $(II)_{i-1}$ to
$\alpha,\delta$ and to $\gamma,\delta$, we find that $\gamma$ is
strictly north and strictly east of $\alpha$, but this contradicts our
assumption that they lie in the same row.  A similar argument shows
that it is impossible for $\bor_{i-1}(\gamma)$ to lie strictly south and
strictly west of $y$.  All these statements taken together show that there is
no possible position for $\bor_{i-1}(\gamma)$.  It follows that $\alpha$ and
$\beta$ must be adjacent.
Now the configuration must be $\cdots \tableau{{\alpha}&{\beta}\\{\theta}&{\bullet}}\cdots$.
Since we are {\tt switch}ing $\beta$ and $\bullet$ and $\theta$ is to the
left of $\bullet$, we must have $\theta\leq \beta$.
This, combined with $(III)_{i-1}$ implies that
$\bor_{i-1}(\theta)\searrow \bor_{i-1}(\beta)$. By
$(II)_{i-1}$ applied
to $\theta$ and $\alpha$, and in view of (\ref{eqn:settingxandy}),
we must have
$\bor_{i-1}(\theta)\ne \bor_{i-1}(\beta)$.
Since $\theta>\alpha$,
we must also have
$\bor_{i-1}(\theta)\not\searrow \bor_{i-1}(\alpha)$.
Together these three observations imply that
$\bor_{i-1}(\theta)$ must be in the column strictly above $\bor_{i-1}(\beta)$, but this violates
the transposed version of
$(II)_{i-1}$ for $\beta,\theta$.

Consider (ii).  This is impossible in $K{\tt jdt}$, because (\ref{eqn:settingxandy}) implies
$\beta>\alpha$,
and so the entry immediately above the bullet is greater than $\alpha$,
which contradicts the fact that we are about to {\tt switch}~$\alpha$.

$(III)_i$: Suppose otherwise.  So we have the following configuration:
$\cdots \tableau{&{\alpha}\\{\beta}&{\bullet}}\cdots$
and $\bor_i(\alpha)$ and $\bor_i(\beta)$ are incomparable under $\searrow$.
Suppose first that $\bor_i(\alpha)$ is strictly south and strictly west of
$\bor_i(\beta)$.
Let $\epsilon$ be an entry of $T_{i}$ such that $\bor_i(\epsilon)$
lies in the same row
as $\bor_i(\alpha)$ and the same column as $\bor_i(\beta)$.  Applying
$(II)_i$ to $\alpha,\epsilon$ and to $\beta,\epsilon$, we find that
$\alpha$ must be weakly south and west of $\beta$, which contradicts
our assumption.

Now suppose that $\bor_i(\alpha)$ is strictly north and strictly east of
$\bor_i(\beta)$.
Let $\epsilon$ be a box of $T_i$
such that $\bor_i(\epsilon)$ lies in
the same row as $\bor_i(\beta)$ and the same column as $\bor_i(\alpha)$.  By
$(II)_i$, $\epsilon$ must lie strictly east of $\beta$, weakly west
of $\alpha$, strictly
south of $\alpha$, and weakly north of $\beta$ so $\epsilon$
must lie in the box where in fact there is a bullet.  This is a
contradiction.
\end{proof}

\begin{Claim}\label{claim:theclaim}
Let $A$ and $B$ be two tableaux of shape $c\times d$.  Consider a common
sequence of reverse $K${\tt jdt} slides applied to each of them, and
consider the resulting {\tt switch} sequences (\ref{eqn:Aseq}) and (\ref{eqn:Bseq}).
\begin{itemize}
\item[(A)] the configurations of $A_i$ and $B_i$ are the same;
\item[(B)] the {\tt switch} from $A_{i-1}$ to $A_{i}$ and the {\tt switch} from $B_{i-1}$ to
$B_{i}$ are both uniform; and
\item[(C)] for each filled box $x$ of the common shape of $A_i$, $B_i$, the box
of origin ${\mathfrak B}_i(x)$ is the same in $A_i$ and $B_i$.
\end{itemize}
\end{Claim}
\begin{proof}
The assertions (A), (B) and (C) are trivially true if $i=0$.  So suppose that they are true
up to step $i-1$.  It follows from Claim \ref{mainprop} (III) that, for
each bullet which will move on the $i$-th step, it will move the same
way in both $A_{i-1}\mapsto A_i$ and $B_{i-1}\mapsto B_i$,
or in other words, the configurations of $A_i$ and $B_i$ are the same. Thus (A) holds. From Claim \ref{mainprop} (I), we
know that the {\tt switch}es from $A_{i-1}$ to $A_i$ and from $B_{i-1}$ to $B_i$
are uniform, establishing (B).  Since the box of origin of any filled box $x$ in
$A_{i-1}$ and $B_{i-1}$ were the same, and the same boxes moved on the
$i$-th step in the two tableaux, it follows that each filled box $x$
in $A_i$ and $B_i$ has the same box of origin, establishing (C). Hence Claim~\ref{claim:theclaim} holds, and
thus the special case of
Theorem~\ref{thm:dualequiv}, where we apply only reverse slides, now holds,
since that is part of the induction claims which we have just established.
\end{proof}

The general case where we use arbitrary slides for (\ref{eqn:Aseq}) and
(\ref{eqn:Bseq}) reduces to the above special case, as in \cite{Haiman}.
The slides in (\ref{eqn:Aseq}) and (\ref{eqn:Bseq}) begin with reverse
$K{\tt jdt}$ slides and then (possibly) some forward $K{\tt jdt}$ slides:
\begin{equation}
\label{eqn:directionseq}
{\tt rev}K{\tt jdt}_{\{\cdots\}}, {\tt rev}K{\tt jdt}_{\{\cdots\}},\cdots,
 {\tt rev}K{\tt jdt}_{\{\cdots\}},K{\tt jdt}_{\{\cdots\}},\cdots
\end{equation}
Call the appearance of a ``${\tt rev}K{\tt jdt}_{\{\cdots\}}$'' followed by
a ``$K{\tt jdt}_{\{\cdots\}}$'', or vice versa, a {\bf direction change}.

To conclude
our proof of the theorem, we induct on the number of direction changes.
The base case where there are no direction changes,
we have checked above. For our induction step, we consider the moment in
(\ref{eqn:directionseq}) where the first direction change occurs.  Suppose
that we have applied ${\tt rev}K{\tt jdt}$ slides to get to $A_j$ and
$B_j$, and that $A_{j+1}$ and $B_{j+1}$ are then each obtained by a $K{\tt jdt}$
slide.
%say during the step $A_j\mapsto A_{j+1}$ and
%$B_j\mapsto B_{j+1}$ in (\ref{eqn:Aseq}) and (\ref{eqn:Bseq}) respectively.
We find a sequence of slides which reverse rectifies $A_j$ and $B_j$ to the southeast corner
of our ambient rectangle $\Lambda$.

We need to know the common shape of these
reverse rectifications is the ``expected'' one (this is not needed in the
corresponding argument in \cite{Haiman} since there he shows standard Young tableaux of
any
straight shape are dual equivalent):

\begin{Lemma}
\label{lemma:recttorect}
If $T\in {\tt INC}(c\times d)$ then the reverse rectification ${\tt rev}K{\tt rect}(T)$
inside
$\Lambda$ is a tableau of shape $c\times d$ placed at the
southeast corner of $\Lambda$.
\end{Lemma}
\noindent
\begin{proof}
This follows from
Claim~\ref{mainprop} (II). The labels of
${\tt rev}K{\tt rect}(T)$
whose boxes of origin are in row
$c$ of $T$ must lie (right justified) in the bottom row of $\Lambda$, since
they form a horizontal strip. Similarly, the labels of
${\tt rev}K{\tt rect}(T)$ whose boxes of origin are in row
$c-1$ must lie right justified and layer on top of the row $c$ labels, for
the same reason, etc.
\end{proof}

Let ${\widehat A}:={\tt rev}K{\tt rect}(A) \mbox{\ and \ } {\widehat B}:=
{\tt rev}K{\tt rect}(B)$,
the shape of both of which we now know to be a $c\times d$ rectangle in
the southeast corner of $\Lambda$,  by Lemma~\ref{lemma:recttorect}.
%Now exploiting the
%natural duality of $\Lambda$, we can view
%$K{\tt jdt}$ slides from ${\widehat A}$ and ${\widehat B}$
%as $K{\tt jdt}$ slides.
Since $K{\tt jdt}$ slides are reversible, we can consider
the $K{\tt jdt}$ steps starting from $\widehat A$ and $\widehat B$
that undo the sequence of
${\tt rev}K{\tt jdt}$ steps taking $A_j$ to ${\widehat A}$ and
$B_j$ to ${\widehat B}$. This returns us to $A_j$ and $B_j$
respectively,
and we can continue with the remainder of the sequence (\ref{eqn:directionseq}).
However our new sequence, starting at ${\widehat A}$ and ${\widehat B}$,
involves one fewer direction change than our original sequence. Hence we are done by induction.

This completes the proof of Theorem~\ref{thm:dualequiv}.
\end{proof}

We remark that
for our application to the proof of Theorem~\ref{thm:main},
we actually only need
that rectangular tableaux of the same shape are $K$-dual
equivalent under reverse $K{\tt jdt}$ slides.

\subsection{Conclusion of the proofs}
Let $S_{\lambda}$ be the
superstandard tableau of shape $\lambda$.
Our proofs utilize the following theorem \cite[Theorem~1.2]{Thomas.Yong:V}:

\begin{Theorem}
\label{thm:welldefined}
Let $T\in {\tt INC}(\nu/\mu)$. If $K{\tt rect}(T)$ is
a superstandard tableau $S_{\lambda}$ for some rectification order, then
$K{\tt rect}(T)=S_{\lambda}$ for any rectification order.
\end{Theorem}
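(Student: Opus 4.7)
The plan is to reduce the assertion to strong $K$-dual equivalence (Theorem~\ref{thm:dualequiv}) by exploiting the rigid row-block structure of the superstandard tableau $S_\lambda$. Recall that rectification orders for $T\in{\tt INC}(\nu/\mu)$ are in natural bijection with increasing tableaux $S\in{\tt INC}(\mu)$ of the inner shape. Fix two such orders $S,S'\in{\tt INC}(\mu)$ and assume that $K{\tt rect}^{S}(T)=S_\lambda$. Since $K{\tt jdt}$ slides are reversible, the statement is equivalent to saying that reverse-rectifying $S_\lambda$ using the $S'$-ordering recovers $T$ as a skew tableau of shape $\nu/\mu$.

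The key structural feature of $S_\lambda$ to be exploited is that its $i$-th row consists of the consecutive integers $\lambda_1+\cdots+\lambda_{i-1}+1,\ldots,\lambda_1+\cdots+\lambda_i$, so the label sets of distinct rows are pairwise disjoint and linearly ordered. Each row is itself a $1\times\lambda_i$ rectangle, a shape to which Theorem~\ref{thm:dualequiv} directly applies.

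I would then proceed by induction on the number of rows of $\lambda$. The base case is a single row: here $S_\lambda$ is itself a rectangle and the result follows at once from Theorem~\ref{thm:dualequiv}, since strong $K$-dual equivalence forces the configurations of the two reverse-slide sequences (those dictated by $S$ and $S'$) to agree throughout, hence also their final skew tableaux. For the inductive step, peel off the bottom row of $S_\lambda$ and argue that its labels evolve independently of the labels from the upper rows: the bottom-row entries are strictly larger, so whenever a bullet stands adjacent to entries from two different rows, the short ribbon containing the bullet is determined locally and splits along the row boundary. The upper rows then form an inductive sub-problem to which the hypothesis applies, with $T$ replaced by the sub-tableau obtained from its restriction to the ``upper-row labels''.

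The hard part will be to justify this decoupling between row-blocks rigorously. In $K{\tt jdt}$ a single {\tt switch} can cause cascading non-local interactions (bullets coalescing into short ribbons that span several row-blocks, and labels splitting), and one must verify that all such interactions are dictated purely by the evolving target shape, not by the particular ordering. An analysis modeled on the ``box of origin'' bookkeeping used in the proof of Theorem~\ref{thm:dualequiv}, carried out row-block by row-block and using the strict ordering of the blocks in $S_\lambda$ as a substitute for the order $\searrow$ on a rectangle, should show that at every step of the reverse rectification the configurations evolve identically under the $S$- and $S'$-orderings, completing the induction.
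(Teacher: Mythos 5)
First, a point of orientation: the paper does not prove Theorem~\ref{thm:welldefined} here at all; it is imported verbatim from \cite[Theorem~1.2]{Thomas.Yong:V}, where its proof is a lengthy, self-contained induction of a quite different character. So there is no in-paper proof to match, and your attempt must stand on its own. It does not, because it rests on a misapplication of Theorem~\ref{thm:dualequiv}. That theorem (and Theorem~\ref{thm:main}, which is built on it) concerns the shape of the \emph{inner} region through which the slides pass: a rectification order for $T\in{\tt INC}(\nu/\mu)$ is a tableau $U\in{\tt INC}(\mu)$, and the argument in Section~3 works because when $\mu=a\times b$ all such $U$ are strongly $K$-dual equivalent, so the sequence of shapes ${\tt shape}(U_i)$ swept out during the rectification is order-independent. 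In Theorem~\ref{thm:welldefined} the inner shape $\mu$ is arbitrary, and it is the \emph{target} $S_\lambda$ that is special. The rows of $S_\lambda$ being $1\times\lambda_i$ rectangles is irrelevant to Theorem~\ref{thm:dualequiv}: those rows are not the region being slid into, and the intermediate tableaux in a rectification of $T$ need not resemble $S_\lambda$ (or decompose into its row blocks) until the very last step. Already your base case fails: for $\lambda$ a single row, Theorem~\ref{thm:dualequiv} gives nothing, since the two slide sequences being compared live on a non-rectangular $\mu$. Indeed, the counterexample (\ref{eqn:counterex}) shows that for non-rectangular $\mu$ different orders genuinely produce different rectifications in general; the content of Theorem~\ref{thm:welldefined} is that this cannot happen when one of the outputs is superstandard, and nothing in your argument engages with where that hypothesis enters.

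Two further gaps. The opening reduction is not an equivalence: reverse-rectification orders of $S_\lambda$ are indexed by fillings of the \emph{outer} complementary region, not by $S'\in{\tt INC}(\mu)$, and the involution ${\tt Kinfusion}(S,T)=(S_\lambda,W)$ only tells you that rectifying $W$ along the order $S_\lambda$ returns $S$; it says nothing about what the order $S'$ does to $T$ unless you already know the conclusion. And the ``decoupling of row blocks'' in the inductive step is precisely the hard point, asserted rather than proved: during a $K{\tt jdt}$ slide the labels are processed in numerical order $1,2,3,\ldots$ regardless of which eventual row of $S_\lambda$ they will land in, short ribbons can mix them, and a box-of-origin bookkeeping in the style of Claim~\ref{mainprop} is only shown in this paper to be consistent when the tableau being tracked is rectangular. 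To repair the argument you would essentially have to reproduce the induction of \cite{Thomas.Yong:V}, which proceeds along entirely different lines.
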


It is Theorem~\ref{thm:welldefined} that makes Theorem~\ref{thm:TY5main} well-defined.
Moreover, Theorem~\ref{thm:welldefined} has the following reformulation, not 
explicitly
mentioned in \cite{Thomas.Yong:V}:

\begin{Corollary}\label{cor:comb}
Fix $C\in {\tt INC}(\mu)$. The
number of tableaux $T$ of shape $\nu/\lambda$ that rectify to $C$ with
respect to the rectification order given by $S_\lambda$,
does not depend on the choice of $C$.
\end{Corollary}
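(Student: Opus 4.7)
The plan is to deduce Corollary \ref{cor:comb} from Theorem \ref{thm:welldefined} via a bijection between fibers of the rectification map. Write $\mathcal{F}_C := \{T \in {\tt INC}(\nu/\lambda) : K{\tt rect}_{S_\lambda}(T) = C\}$; the claim is that $|\mathcal{F}_C|$ is independent of $C \in {\tt INC}(\mu)$. Theorem \ref{thm:welldefined}, applied with the roles of $\lambda$ and $\mu$ swapped, gives that the set $\{T \in {\tt INC}(\nu/\lambda) : K{\tt rect}_S(T) = S_\mu\}$ is the same subset of ${\tt INC}(\nu/\lambda)$ for every rectification order $S \in {\tt INC}(\lambda)$. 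In particular $|\mathcal{F}_{S_\mu}|$ is well-defined. The goal is to upgrade this invariance to $|\mathcal{F}_C| = |\mathcal{F}_{S_\mu}|$ for every $C$.

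To this end, one would construct a bijection $\Psi_C \colon \mathcal{F}_{S_\mu} \to \mathcal{F}_C$. Given $T \in \mathcal{F}_{S_\mu}$, record the full sequence $\Sigma$ of $K{\tt jdt}$ slides used to rectify $T$ via the order $S_\lambda$ to $S_\mu$, and apply the reverse of $\Sigma$ starting from $C$ (in place of $S_\mu$) to produce $\Psi_C(T)$. Define $\Psi_C^{-1}$ symmetrically, using the slide sequence that takes a given $T' \in \mathcal{F}_C$ to $C$ and then applying its reverse to $S_\mu$. To complete the proof one must check that $\Psi_C(T)$ indeed has shape $\nu/\lambda$ and that its $S_\lambda$-rectification equals $C$.

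The main obstacle is verifying well-definedness of $\Psi_C$: the reversed slide sequence applied to $C$ must trace out the same sequence of shapes as when applied to $S_\mu$, so that one still terminates at shape $\nu/\lambda$. When $\mu$ is a rectangle this is immediate from strong $K$-dual equivalence (Theorem \ref{thm:dualequiv}). For non-rectangular $\mu$, strong $K$-dual equivalence fails in general, so one must invoke Theorem \ref{thm:welldefined} more subtly: any divergence between the shape sequences produced from $C$ and from $S_\mu$ would, after returning to a skew tableau on $\nu/\lambda$ and re-rectifying via a suitably modified order, yield a tableau that rectifies to $S_\mu$ under one order but not another, contradicting the theorem. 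Propagating this local obstruction argument through the full slide sequence should give the bijection and hence the corollary.
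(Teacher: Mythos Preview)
Your strategy of constructing a bijection between fibers is the right idea, and in fact is what the paper does. But the specific construction you propose has a real gap, and the contradiction sketch in your final paragraph does not close it.

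The issue is exactly where you locate it: well-definedness of $\Psi_C$. When you record the slide sequence $\Sigma$ that rectifies $T$ to $S_\mu$ and then try to run $\Sigma^{-1}$ starting from $C$, each reverse slide must be applied at a prescribed set of \emph{outer} corners, namely the positions where the $\bullet$'s landed at the corresponding forward step of $\Sigma$. After the first reverse slide on $C$, the shape of the evolving tableau can already differ from the shape one sees starting from $S_\mu$ (since $\mu$ is not assumed rectangular, $K$-dual equivalence genuinely fails here), and then the next prescribed set of boxes need not be outer corners of the current shape at all. So $\Psi_C(T)$ may simply fail to exist as you have defined it. Your proposed contradiction---that a divergence would produce a tableau rectifying to $S_\mu$ under one order but not another---does not go through: nothing in the construction hands you such a tableau, and Theorem~\ref{thm:welldefined} concerns rectification to a \emph{superstandard} target, whereas you are reverse-sliding away from $C$, which is arbitrary.

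The fix, and what the paper does, is to stop tracking $\Sigma$ as a literal sequence of slides and instead package it as the second output of ${\tt Kinfusion}$: write ${\tt Kinfusion}(S_\lambda,T)=(C,W)$ with $W\in{\tt INC}(\nu/\mu)$. Because ${\tt Kinfusion}$ is an involution, this gives a bijection between $\mathcal{F}_C$ and $\{W\in{\tt INC}(\nu/\mu): K{\tt rect}_C(W)=S_\lambda\}$. Now Theorem~\ref{thm:welldefined} applies cleanly to the latter set: $S_\lambda$ is superstandard, so whether $W$ rectifies to it is independent of the rectification order $C$. Hence the set, and thus $|\mathcal{F}_C|$, does not depend on $C$. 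The key move you are missing is this role-reversal: rather than varying the target $C$ with fixed order $S_\lambda$, use the involution to make $C$ the \emph{order} and the superstandard $S_\lambda$ the \emph{target}, which is precisely the situation Theorem~\ref{thm:welldefined} controls.
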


%The above result makes the following combinatorial rule
%\cite[Theorem~1.4]{Thomas.Yong:V} for the
%Schubert structure constant well-defined
%\begin{Theorem}
%\label{thm:KLR}
%$(-1)^{|\nu|+|\lambda|+|\mu|}\ \ C_{\lambda,\mu}^{\nu}$ counts the
%number of $T\in {\tt INC}(\nu/\lambda)$ where
%$K{\tt rect}(T)=S_{\mu}$.
%\end{Theorem}
\begin{proof}
We refer the reader to \cite[Section~3]{Thomas.Yong:V}
for the definition of
\[{\tt Kinfusion}(T,U)=({\tt Kinfusion}_1(T,U),{\tt Kinfusion}_2(T,U)).\]

$V$ rectifies with respect to the rectification order
$S_\lambda$ to $C$, if and only if
${\tt Kinfusion}(S_\lambda,V)=(C,W)$ for some $W$ of shape $\nu/\mu$.
Since {\tt Kinfusion} is an involution \cite[Theorem 3.1]{Thomas.Yong:V},
it defines a bijection between
the following two sets \begin{enumerate}\item
$\{V$ of shape $\nu/\lambda$ that rectifies with respect to
$S_\lambda$ to $C\}$
\item $\{W$ of shape $\nu/\mu$ that rectifies with respect to $C$ to
$S_\lambda\}$.
\end{enumerate}
By Theorem~\ref{thm:welldefined}, the number of $W$ as in (2) is independent
of the choice of $C$, so the same must be true of the set (1) also.
\end{proof}

\noindent\emph{Proof of Theorem~\ref{thm:main}:}
A choice of $K$-rectification order amounts to a choice of $U\in {\tt INC}(R)$.
Consider ${\tt Kinfusion}(U,T)=(C,D)$, where by definition,
$C={\tt Krect}(T)$. Let
$U_0:=U\mapsto U_1\mapsto U_2\mapsto \cdots \mapsto U_m$,
be a sequence of increasing tableaux where $U_i$ is the result of
reverse $K{\tt jdt}$ slides into corners marked by the labels $i$ of
$T$.  The boxes labeled $i$ in $C$ are exactly those
vacated
as we move from $U_{i-1}$ to $U_i$. Thus, $C$ is completely determined by
${\tt shape}(U_i) \mbox{\ for $i=0,\ldots,m$.}$
This sequence of shapes
is independent of the choice of $U$, by Theorem~\ref{thm:dualequiv},
hence (I) holds.

To prove (II), observe that, by (I), we may
assume that our rectification order is given by $S_R$. (II) now follows from
Corollary~\ref{cor:comb}.
\qed

\medskip
\noindent
\emph{Proof of Theorem~\ref{cor:main}:}
In Section~2, we proved by a geometrical argument that $D_{\lambda,\mu}^{\nu}=C_{\Omega^{\vee},\nu}^{\lambda\dagger\mu}$.
%where recall
%\[\Lambda=k\times (n-k)\]
%is a rectangular
%Young diagram and $\Theta$ is the shape obtained by placing $\lambda,\Lambda$ and
%$\mu$ as
%depicted below :
%\begin{figure}[h]
%\begin{picture}(150,90)
%\put(-10,40){$\Theta=$}
%\put(20,40){\makebox[0pt][l]{\framebox(80,50)}}
%\put(20,40){\line(0,-1){40}}
%\put(20,0){\line(1,0){40}}
%\put(60,0){\line(0,1){10}}
%\put(60,10){\line(1,0){20}}
%\put(80,10){\line(0,1){30}}
%\put(100,50){\line(1,0){30}}
%\put(130,50){\line(0,1){20}}
%\put(130,70){\line(1,0){10}}
%\put(140,70){\line(0,1){20}}
%\put(140,90){\line(-1,0){40}}
%\put(55,60){$\Lambda$}
%\put(55,20){$\mu$}
%\put(110,60){$\lambda$}
%\end{picture}
%\end{figure}
In view of this, the theorem follows from
Theorem~\ref{thm:main} combined with Theorem~\ref{thm:TY5main}.
\qed

\section{Examples}

\begin{Example} Let $\lambda=\tableau{{\ }&{ \ }}$ and
$\mu=\tableau{{\ }&{\ }\\{\ }}$. There are fifteen increasing tableaux $T$
of shape $\lambda\star\mu$ with labels from $\{1,2,3\}$.
We list these below, together with
$K{\tt rect}(T)$, collected into $7$ groups, by the shape of $K{\tt
rect}(T)$:
\[\left\{\tableau{&&{1}&{2}\\{1}&{2}\\{3}}\mapsto
\tableau{{1}&{2}\\{2}\\{3}},\tableau{&&{1}&{3}\\{2}&{3}\\{3}}\mapsto
\tableau{{1}&{3}\\{2}\\{3}}\right\}_{\smtableau{{ \ }&{\ }\\{\ }\\{\ }}},
\left\{\tableau{&&{1}&{2}\\{1}&{3}\\{2}}\mapsto \tableau{{1}&{2}\\{2}&{3}}
\right\}_{\smtableau{{ \ }&{\ }\\{\ }&{\ }}},
\]
\[\left\{\tableau{&&{1}&{2}\\{1}&{3}\\{3}}\mapsto
\tableau{{1}&{2}\\{3}},
\tableau{&&{1}&{3}\\{1}&{3}\\{2}}\mapsto
\tableau{{1}&{3}\\{2}},
\tableau{&&{1}&{2}\\{1}&{2}\\{2}}\mapsto
\tableau{{1}&{2}\\{2}},\right.\]
\[\left.
\tableau{&&{1}&{3}\\{1}&{3}\\{3}}\mapsto
\tableau{{1}&{3}\\{3}},
\tableau{&&{2}&{3}\\{2}&{3}\\{3}}\mapsto
\tableau{{2}&{3}\\{3}}
\right\}_{\smtableau{{\ }&{\ }\\{\ }}},\]
\[\left\{\tableau{&&{1}&{3}\\{1}&{2}\\{3}}\mapsto
\tableau{{1}&{2}&{3}\\{2}\\{3}}
\right\}_{\smtableau{{\ }&{\ }&{\ }\\{\ }\\{ \ }}},
\left\{\tableau{&&{1}&{2}\\{2}&{3}\\{3}}\mapsto
\tableau{{1}&{2}\\{2}&{3}\\{3}}
\right\}_{\smtableau{{\ }&{\ }\\{\ }&{\ }\\{\ }}},\]
\[\left\{\tableau{&&{2}&{3}\\{1}&{3}\\{2}}\mapsto
\tableau{{1}&{2}&{3}\\{2}&{3}}
\right\}_{\smtableau{{\ }&{\ }&{\ }\\{\ }&{\ }}},
\]
\[\left\{\tableau{&&{2}&{3}\\{1}&{2}\\{3}} \ \&  \
\tableau{&&{2}&{3}\\{1}&{3}\\{3}}
\mapsto
\tableau{{1}&{2}&{3}\\{3}},
\tableau{&&{2}&{3}\\{1}&{2}\\{2}} \ \&  \
\tableau{&&{1}&{3}\\{1}&{2}
\\{2}}
\mapsto
\tableau{{1}&{2}&{3}\\{2}}
\right\}_{\smtableau{{\ }&{\ }&{\ }\\{\ }}}.
\]

Note that for each fixed straight shape $\nu$, each increasing tableau of shape $\nu$
appears the same number of times, in agreement with Theorem~\ref{thm:main}(II).
In particular, the last group witnesses $D_{(2),(2,1)}^{(3,1)}=-2$.\qed
\end{Example}

A.~Buch's rule \cite[Section~6]{Buch:KLR} states
that if
$\lambda=(\lambda_1,\ldots,\lambda_p)  \mbox{\ and $\mu=(\mu_1,\ldots,\mu_q)$}$
then $D_{\lambda,\mu}^{\nu}$ equals $(-1)^{|\nu|+|\lambda|+|\mu|}$ times the number of
set-valued tableaux $Y$ of shape $\nu$ such that the bottom-up and left to
right
reading word is a partial reverse lattice word with respect to both of the
intervals $[1,p]$ and $[p+1,p+q]$, and has content
$(\lambda_1,\ldots,\lambda_p,\mu_1,\ldots,\mu_q)$.
For brevity, we refer
the
interested reader to
his paper for precise definitions. The following
two
tableaux of shape $(3,1)$ satisfy Buch's rule (for $p=1$ and $q=2$):
\[\ktableau{{1}&{1,2}&{2}\\{3}}, \ \ktableau{{1}&{1}&{2}\\{2,3}} \mbox{\ \ \
with respective reading words: $31122$ and $23112$.}\]
The reader can check that these are the only
set-valued tableaux of this shape and content $(2,2,1)$ such that
there are always more $2$'s than $3$'s as one reads these reading words
from
right to left. This therefore agrees with our independent computation.

The following example illustrates Theorem~\ref{thm:main3}:
\begin{Example}
We have that $E_{(1),(1)}^{(2,1)}=-3$ since the coefficient is equal to $(-1)^{3-1-1}$ times the following witnessing tableaux
from ${\widehat{{\tt INC}}}((2,1)/(1))$:
$\tableau{{\ }&{1}\\{1}}, \ \ \tableau{{\ }&{1}\\{X}}, \ \ \tableau{{\ }&{X}\\{1}}$.
\qed
\end{Example}

\section{Theorem~\ref{thm:main} is sharp}
If we replace the rectangle $R$ in Theorem~\ref{thm:main} by any nonrectangular
shape $\lambda$, one can always construct a shape $\nu\supseteq\lambda$
and a tableau $T\in {\tt INC}(\nu/\lambda)$ such that the conclusion (I)
of the theorem fails and hence (II) no longer makes sense. Thus Theorem~\ref{thm:main} is sharp.

To see this, first compare the rectification of the tableau
\begin{equation}
\label{eqn:counterex}
T=\tableau{&&{2}\\&{1}&{4}\\{1}&{3}} \mbox{\ via \ }
K{\tt infusion}_1\left(\tableau{{1}&{2}\\{3}},T\right) \mbox{ \ and \ }
K{\tt infusion}_1\left(\tableau{{1}&{3}\\{2}},T\right),
\end{equation}
where as usual we use a labeling of $\lambda=(2,1)$ to indicate the order
of $K{\tt rectification}$, see \cite[Section~3]{Thomas.Yong:V}. The reader can check that these two $K$-infusions
give, respectively,
$\tableau{{1}&{2}&{4}\\{3}} \mbox{\ \ and \ \ }
\tableau{{1}&{2}&{4}\\{3}&{4}}$.
This example was first mentioned in
\cite[Section~8]{Thomas.Yong:V}.

Now, suppose $\lambda$ is not a rectangle. One can
generalize the above counterexample. For brevity, we do this for a
particular
instance of $\lambda$ and leave the straightforward details of the general argument to the interested
reader.

\begin{Example}
Let $\lambda=(6,6,3,1)$ and let $\nu=(7,6,5,2,1)$ with the indicated
filling
${\widetilde T}\in {\tt INC}(\nu/\lambda)$, that appropriately ``spreads out'' the $T$ from (\ref{eqn:counterex}):
\[{\widetilde T}=\tableau{{\ }&{\ }&{\ }&{ \ }&{ \ }&{ \ }&{2 }\\{\ }&{\
}&{\ }&{ \ }&{\ }&{\ }\\
{\ }&{\ }&{\ }&{1}&{4}\\{\ }&{3}\\{1}}.\]
One can choose a partial $K$-rectification of the
numerical labels to obtain that $T$. We illustrate
this for our example below, but this clearly extends to any $\lambda$:
\[{\widetilde T}=\tableau{{\ }&{\ }&{\ }&{ \ }&{ \ }&{ \ }&{2 }\\{\ }&{\
}&{\ }&{ \ }&{\ }&{\ }\\
{\ }&{\ }&{\ }&{1}&{4}\\{1 }&{3}\\{\ }}\mapsto
\tableau{{\ }&{\ }&{\ }&{ \ }&{ \ }&{ \ }&{2 }\\{\ }&{\ }&{\ }&{ \ }&{\ }&{\
}\\
{\ }&{1 }&{4 }&{\ }&{\ }\\{1 }&{3}\\{\ }}
\mapsto
\tableau{{\ }&{\ }&{\ }&{ \ }&{ \ }&{ \ }&{2 }\\{\ }&{1 }&{4 }&{ \ }&{\ }&{\
}\\
{1 }&{3 }&{\ }&{\ }&{\ }\\{\ }&{\ }\\{\ }}\mapsto T=\tableau{{\ }&{\
}&{2}\\{\ }&{1}&{4}\\{1}&{3}}
\]
Completing the $K$-rectification in two ways as in
(\ref{eqn:counterex}) gives different $K$-rectifications of ${\widetilde T}$.\qed
\end{Example}

Note that one can always use $\nu$ that has at most one more row and one more
column than $\lambda$ does.
% If we think of $\lambda$ as contained in $\Lambda$, we have well definedness as long as
% \lambda has at most two outer corners, as conversely.

\section{Non-associative products of tableaux}
Theorem~\ref{thm:main}(I) allows us to define a product $\odot$
on increasing tableaux $T,U$ by setting
\[T\odot U=K{\tt rect}(T\star U)\]
where $T\star U$ means that $T$ and $U$ are arranged
corner to corner, so that ${\tt shape}(T\star U)={\tt shape}(T)\star{\tt shape}(U)$.
However, the fact that $K$-rectification is not well-defined for general skew shapes,
means that this
product, unlike the plactic product, is not associative.

In \cite{BKSTY}, another non-associative product $\diamond$ on tableaux was constructed. To define it
we briefly recall the existence of {\bf Hecke insertion}, that
inserts a non-negative integer $x$ into an increasing tableau $Z$, resulting in another increasing tableau $Y$.
We denote this by $Z\leftarrow x$.
This insertion is a generalization of the insertion algorithms of C.~Schensted \cite{Schensted} and of
P.~Edelman--C.~Greene \cite{Edelman.Greene}. Hecke insertion was reformulated in
\cite{Thomas.Yong:VI} in terms of $K{\tt rect}$. In the terminology of this paper, we have:
\begin{equation}
\label{eqn:heckeiskjdt}
Z\leftarrow x = Z\odot \tableau{{x}}.
\end{equation}
More generally we can speak of inserting an increasing tableau $W$ into
$Z$. If $(w_1,\ldots,w_p)$ is the left to right, bottom to top row reading word of $W$, then define the product
\[Z\diamond W=(((Z\leftarrow w_1)\leftarrow w_2)\leftarrow\cdots\leftarrow w_p).\]
In view of (\ref{eqn:heckeiskjdt}) it is natural to suspect that the $\odot$ and $\diamond$ products are the same. However
this is not true, as one can verify that
\[\tableau{{1}&{2}&{3}\\{2}&{4}&{5}}\odot \tableau{{1}&{2}}\neq
\tableau{{1}&{2}&{3}\\{2}&{4}&{5}}\diamond \tableau{{1}&{2}}\left(=
\left(\tableau{{1}&{2}&{3}\\{2}&{4}&{5}}\odot \tableau{{1}}\right)\odot\tableau{{2}}\right),
\]
since the left and right hand sides of the ``$\neq$''
are respectively
$\tableau{{1}&{2}&{3}\\{2}&{3}&{5}\\{4}&{5}} \mbox{\ and \ }
\tableau{{1}&{2}&{3}\\{2}&{3}&{5}\\{4}}$.
Note that underlying the example of the inequivalence of $\odot$ and $\diamond$ is
the non-associativity of the
$\odot$ product, i.e.,
\[\tableau{{1}&{2}&{3}\\{2}&{4}&{5}}\odot \left(\tableau{{1}}\odot\tableau{{2}}\right)\neq
\left(\tableau{{1}&{2}&{3}\\{2}&{4}&{5}}\odot \tableau{{1}}\right)\odot\tableau{{2}}.\]

%In the classical theory there are also two products on
%tableaux: the \emph{plactic} product \cite{Knuth} and the \emph{Coxeter-Knuth product} \cite{Edelman.Greene}.
%These can be defined by Schensted and Edelman-Greene insertion respectively. In particular,
%these classical products can be defined in terms of these algorithms. However, while Hecke insertion
%defines $\diamond$, we do not have an analogue of Schensted that defines $\odot$.

%It would be desirable to have a geometric explanation of {\it jeu de taquin} for increasing tableaux
%that helps clarify the appearance of non-well-definedness of $K{\tt rect}$
%and/or non-associativity of $\odot$ (or $\diamond$). We are aware of at least two geometric
%approaches in the classical setting: via the geometry of the Springer fiber \cite{van} and via
%the geometry of the Wronskian map \cite{Purbhoo}. Can either of these viewpoints be extended?

\section*{Acknowledgments}
The authors thank Anders Buch, Thomas Lam, Aaron Lauve, Li Li, Kevin Purbhoo,
Pavlo Pylyavskyy and David Speyer
for helpful conversations. We especially thank Allen Knutson,
for his questions that led to the inclusion of Section~2.
HT was supported by an NSERC Discovery Grant. AY was supported by NSF
grants
DMS 0601010 and DMS 0901331.


\begin{thebibliography}{BuKrShTaYo08}
\bibitem[AnGrMi08]{Anderson.Griffeth.Miller} D.~Anderson, S.~Griffeth
and E.~Miller, \emph{Positivity and Kleiman transversality in
equivariant $K$-theory of homogeneous spaces}, J.~Eur.~Math.~Soc., to appear, 2010.
\textsf{arXiv:0808.2785}
\bibitem[Br03]{Brion:notes} M.~Brion, \emph{Lectures on the geometry of flag varieties},
Notes de l'\'{e}cole d'\'{e}t\'{e} ``Schubert Varieties'' (Varsovie, 2003), 59 pages.
\bibitem[Bu05]{Buch:JAMS} A.~S.~Buch, \emph{Alternating signs of quiver coefficients},
J.~Amer.~Math.~Soc. {\bf 18}(2005), no.~1, 217--237.
\bibitem[Bu03]{Buch:quiver} \bysame, \emph{Grothendieck classes of quiver
    varieties},
Duke Math.~J.~{\bf 115} (2002), no. 1, 75--103.
\bibitem[Bu02]{Buch:KLR} \bysame, \emph{A Littlewood-Richardson
rule for the K-theory of Grassmannians}, Acta Math., {\bf 189}(2002),
37--78.
%\bibitem[Bu00]{Buch:jalg} \bysame, \emph{Stanley symmetric functions and quiver varieties},
%J.~Algebra {\bf 235}(2001), 243--260.
%\bibitem[BuFu99]{Buch.Fulton} A.~S.~Buch and W.~Fulton,
%\emph{Chern class formulas for quiver varieties},
%Invent.~Math. {\bf 135}(1999), no.~3, 665--687.
\bibitem[BuKrShTaYo08]{BKSTY}
A.~Buch, A.~Kresch, M.~Shimozono, H.~Tamvakis and A.~Yong,
\emph{Stable Grothendieck polynomials and K-theoretic factor sequences},
Math.~Ann.~{\bf 340}(2008), no.~2, 359--382.
%\bibitem[BuSoYo05]{BSY} A.~S.~Buch, F.~Sottile and A.~Yong,
%\emph{Quiver coefficients are Schubert structure constants},
%Math.~Res.~Lett.~{\bf 12}(2005), no. 4, 567--574.
\bibitem[EdGr87]{Edelman.Greene} P.~Edelman and C.~Greene,
\emph{Balanced tableaux}, Adv. in Math. 63 (1987), no. 1, 42--99.
\bibitem[Fu97]{Fulton} W.~Fulton, \emph{Young tableaux}, Cambridge
University Press, 1997.
\bibitem[Ha92]{Haiman} M.~D.~Haiman, \emph{Dual equivalence with
applications, including a conjecture of Proctor}, Discrete Math.~{\bf
99}:1-3 (1992), 79--113.
\bibitem[Kn70]{Knuth} D.~E.~Knuth, \emph{Permutations, matrices, and generalized Young tableaux,}
Pacific J.~Math. {\bf 34}(1970), 709--727.
\bibitem[KnMiSh06]{KMS} A.~Knutson, E.~Miller and M.~Shimozono,
\emph{Four positive formulae for type $A$ quiver polynomials},
Invent.~Math. {\bf 166}(2006), no. 2, 229--325.
\bibitem[LaPy07]{Lam.Pyl} T.~Lam and P.~Pylyavskyy, \emph{Combinatorial
    Hopf algebras and
$K$-homology of Grassmanians}, Int.~Math.~Res.~Not. IMRN 2007, no. 24.
\bibitem[LeRoSo08]{Lenart.Robinson.Sottile}
C.~Lenart, S.~Robinson and F.~Sottile,
\emph{Grothendieck polynomials via permutation patterns and chains in the
Bruhat order}, Amer.~J.~Math.~{\bf 128}(2006), no. 4, 805--848.
%\bibitem[Mi05]{Miller} E.~Miller, \emph{Alternating formulas for $K$-theoretic quiver polynomials}, Duke Math.~J. {\bf 128} (2005), no.
%    1, 1--17.
%\bibitem[Oo66]{Oort} F.~Oort, \emph{Algebraic group schemes in characteristic zero are reduced},
%Invent.~Math.~{\bf 2}(1966), 79--80.
%\bibitem[Pu08]{Purbhoo} K.~Purbhoo, \emph{Jeu de taquin and a monodromy
%    problem for Wronskians of polynomials}, preprint, 2009.
%    \textsf{arXiv:0902.1321}
\bibitem[Sc61]{Schensted} C.~Schensted, \emph{Longest increasing and decreasing subsequences},
Canadian J.~Math. {\bf 13}(1961), 179--191.
\bibitem[Sc77]{Schutzenberger} M.~P.~Sch\"{u}tzenberger, \emph{La
correspondance de Robinson}, pp. 59--113 in \emph{Combinatoire et
repr\'{e}sentation du groupe sym\'{e}trique} (Strasbourg, 1976), edited by
D.~Foata,
Lecture Notes in Math.~{\bf 579}, Springer, Berlin, 1977.
\bibitem[ThYo10+]{Thomas.Yong:VII} H.~Thomas and A.~Yong,
    \emph{Equivariant
$K$-theory of Grassmannians}, in preparation, 2010.
\bibitem[ThYo10]{Thomas.Yong:X} \bysame, \emph{$K$-theoretic Schubert calculus for $OG(n,2n+1)$
and jeu de taquin for shifted increasing tableaux}, preprint, 2010.
\bibitem[ThYo09]{Thomas.Yong:V} \bysame, \emph{A jeu de taquin
theory for increasing tableaux, with applications to $K$-theoretic
Schubert calculus}, Algebra and Number Theory J.,
Vol. 3 (2009), No. 2, 121--148.
\bibitem[ThYo08]{Thomas.Yong:VI} \bysame, \emph{Longest strictly
    increasing subsequences, Plancherel measure and the Hecke insertion
    algorithm}, preprint, 2008. \textsf{arXiv:0801.1319}
%\bibitem[vLe00]{van} M.~van Leeuwen,
%\emph{Flag varieties and interpretations of Young tableau algorithms},
%J. Algebra 224 (2000), no. 2, 397--426.
\end{thebibliography}
\end{document}